\begin{document}

\newtheorem{theorem}{Theorem}[section]
\newtheorem{observation}[theorem]{Observation}
\newtheorem{corollary}[theorem]{Corollary}
\newtheorem{problem}[theorem]{Problem}
\newtheorem{question}[theorem]{Question}
\newtheorem{lemma}[theorem]{Lemma}
\newtheorem{proposition}[theorem]{Proposition}

\theoremstyle{definition}
\newtheorem{definition}[theorem]{Definition}
\newtheorem{guess}[theorem]{Conjecture}
\newtheorem{claim}[theorem]{Claim}
\newtheorem{example}[theorem]{Example}
\newtheorem{remark}[theorem]{Remark}

\makeatletter
  \newcommand\figcaption{\def\@captype{figure}\caption}
  \newcommand\tabcaption{\def\@captype{table}\caption}
\makeatother

\newtheorem{acknowledgement}[theorem]{Acknowledgement}

\newtheorem{axiom}[theorem]{Axiom}
\newtheorem{case}[theorem]{Case}
\newtheorem{conclusion}[theorem]{Conclusion}
\newtheorem{condition}[theorem]{Condition}
\newtheorem{conjecture}[theorem]{Conjecture}
\newtheorem{criterion}[theorem]{Criterion}
\newtheorem{exercise}[theorem]{Exercise}
\newtheorem{notation}[theorem]{Notation}
\newtheorem{solution}[theorem]{Solution}
\newtheorem{summary}[theorem]{Summary}
\newtheorem{fact}[theorem]{Fact}

\newcommand{\pp}{{\it p.}}
\newcommand{\de}{\em}
\newcommand{\mad}{\rm mad}

\newcommand*{\QEDA}{\hfill\ensuremath{\blacksquare}}  
\newcommand*{\QEDB}{\hfill\ensuremath{\square}}  

\newcommand{\qf}{Q({\cal F},s)}
\newcommand{\qff}{Q({\cal F}',s)}
\newcommand{\qfff}{Q({\cal F}'',s)}
\newcommand{\f}{{\cal F}}
\newcommand{\ff}{{\cal F}'}
\newcommand{\fff}{{\cal F}''}
\newcommand{\fs}{{\cal F},s}

\newcommand{\g}{\gamma}
\newcommand{\wrt}{with respect to }

\def\C#1{|#1|}
\def\E#1{|E(#1)|}
\def\V#1{|V(#1)|}
\def\cB{{\mathcal B}}
\def\cD{{\mathcal D}}
\def\cF{{\mathcal F}}
\def\cI{{\mathcal I}}
\def\cP{{\mathcal P}}
\def\cT{{\mathcal T}}
\def\NN{{\mathbb N}}
\def\VEC#1#2#3{#1_{#2},\dots,#1_{#3}}
\def\VECOP#1#2#3#4{#1_{#2}#4\dots#4#1_{#3}}
\def\vp{\varphi}
\def\ve{\varepsilon}
\def\FR{\frac}
\def\st{\colon\,}
\def\esub{\subseteq}
\def\nul{\varnothing}
\def\SE#1#2#3{\sum_{#1=#2}^{#3}}
\def\Phi{\vp}
\def\iarb{\Upsilon}
\def\ipac{\nu}

\title{Rainbow monochromatic $k$-edge-connection colorings of graphs\footnote{Supported by NSFC No.11871034 and 11531011.}}

\author{\small Ping Li, ~ Xueliang Li\\
\small Center for Combinatorics and LPMC\\
\small Nankai University\\
\small Tianjin 300071, China\\
\small wjlpqdxs@163.com, ~ lxl@nankai.edu.cn\\
}
\date{}
\maketitle

\vspace{-2pc}

\begin{abstract}
A path in an edge-colored graph is called a monochromatic path if all edges of the path have a same color.
We call $k$ paths $P_1,\cdots,P_k$ rainbow monochromatic paths if every $P_i$ is monochromatic and for any
two $i\neq j$, $P_i$ and $P_j$ have different colors. An edge-coloring of a graph $G$ is said to be a rainbow
monochromatic $k$-edge-connection coloring (or $RMC_k$-coloring for short) if every two distinct vertices
of $G$ are connected by at least $k$ rainbow monochromatic paths. We use $rmc_k(G)$ to denote the maximum number
of colors that ensures $G$ has an $RMC_k$-coloring, and this number is called the rainbow monochromatic
$k$-edge-connection number. We prove the existence of $RMC_k$-colorings of graphs,
and then give some bounds of $rmc_k(G)$ and present some graphs whose $rmc_k(G)$ reaches the lower bound.
We also obtain the threshold function for $rmc_k(G(n,p))\geq f(n)$, where $\lfloor\frac{n}{2}\rfloor> k\geq 1$.\\
[2mm] {\bf Keywords:} monochromatic path, rainbow monochromatic paths, rainbow monochromatic
$k$-edge-connection coloring (number), threshold function.\\
[2mm] {\bf AMS subject classification (2010)}: 05C15, 05C40.
\end{abstract}

\baselineskip16pt
\section{Introduction}

All graphs considered in this paper are simple, except for some graphs in Section $3$.
Let $G$ be a graph and let $V(G)$, $E(G)$ denote the vertex set and the edge set of $G$, respectively.
Let $|G|$ (also $v(G)$) denote the number of vertices of $G$ (the {\em order} of $G$),
and let $e(G)$ (also $||G||$) denote the number of edges of $G$ (the {\em size} of $G$).
If there is no confusion, we use $n$ and $m$ to denote, respectively,
the number of vertices and edges of a graph, throughout this paper.
For $v\in V(G)$, let $d_G(v)$ denote the degree of $v$, $N(v)$ denote the set of neighbors of $v$, and $N[v]=N(v)\cup\{v\}$.
Let $\delta(G)$ and $\Delta(G)$ denote the minimum and maximum degree of $G$, respectively.
Let $U$ and $S$ be a vertex set and an edge set of $G$, respectively. then,
$G-U$ is a graph obtained from $G$ by deleting the vertices of $U$ together with the edges incident with the vertices of $U$, and
$G-S$ is a graph obtained from $G$ by deleting the edges of $S$, and then deleting the isolate vertices.
Let $G[U]$ and $G[S]$ be the vertex-induced and edge-induced subgraph of $G$, respectively, by $U$ and $S$.
The distance of $u, v$ in $G$ is denoted by $d_G(u,v)$.
For all other terminology and notation not defined here we follow Bondy and Murty \cite{BM}.

For a graph $G$, let $\Gamma:~E(G)\rightarrow[k]$ be an edge-coloring of $G$ that allows a same color to be assigned to adjacent edges,
here and in what follows $[k]$ denotes the set $\{1,2,\cdots, k\}$ of integers for a positive integer $k$.
For an edge $e$ of $G$, we use $\Gamma(e)$ to denote the color of $e$.
If $H$ is a subgraph of $G$, we also use $\Gamma(H)$ to denote the set of colors on the edges of $H$ and use $|\Gamma(H)|$
to denote the number of colors in $\Gamma(H)$.

A {\em monochromatic uv-path} is a $uv$-path of $G$ whose edges colored with a same color, and $G$ is {\em monochromatically connected }
if for any two vertices of $G$, $G$ has a monochromatic path connecting them.
An edge-coloring $\Gamma$ of $G$ is a {\em monochromatic connection coloring (or MC-coloring for short) }if it makes $G$ monochromatically connected.
The {\em monochromatic connection number} of a connected graph $G$, denoted by $mc(G)$, is the maximum number of colors that are allowed in order to
make $G$ monochromatically connected. An {\em extremal MC-coloring} of $G$ is an $MC$-coloring that uses $mc(G)$ colors.

The notion monochromatic connection coloring was introduced by Caro and Yuster \cite{CY}. Some results were obtained in \cite{CLW,GLQZ,JLW,MWYY,LW}.
Later, Gonzlez-Moreno et al. in \cite{MGM} generalized the above concept to digraphs.

We list the main results in \cite{CY} below.
\begin{theorem} [\cite{CY}] \label{RMD-CY}
Let $G$ be a connected graph with $n\geq 3$. If $G$ satisfies any of the following properties, then $mc(G)=m-n+2$.
\begin{enumerate}
 \item $\overline{G}$ (the complement of $G$) is a $4$-connected graph;
 \item $G$ is triangle-free;
 \item $\Delta(G)<n-\frac{2m-3(n-1)}{n-3}$;
 \item $diam(G)\geq3$;
 \item $G$ has a cut vertex.
 \end{enumerate}
\end{theorem}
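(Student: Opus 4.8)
The lower bound $mc(G)\ge m-n+2$ holds for every connected graph and uses none of the five hypotheses: color the edges of a fixed spanning tree with a single color and give each of the remaining $m-(n-1)$ edges its own new color, for a total of $m-n+2$ colors; the spanning tree already connects every pair of vertices monochromatically. Hence the entire content of the theorem is the matching upper bound $mc(G)\le m-n+2$, and the plan is to prove this separately under each of the conditions (1)--(5). The first step is to normalize the extremal colorings: in any MC-coloring attaining $mc(G)$ colors, every color class must induce a tree. Indeed, a monochromatic cycle would let me delete one of its edges and recolor that edge with a brand-new color, preserving monochromatic connectivity (deleting a cycle edge keeps the class connected) while gaining a color, contradicting extremality; likewise a disconnected color class could have one of its components recolored with a fresh color. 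So I may assume the color classes are edge-disjoint trees $T_1,\dots,T_t$ with $t=mc(G)$.

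Next I would reduce the problem to a covering condition. Since a single edge is by itself a monochromatic path, every adjacent pair is automatically connected, so the MC-condition constrains only the non-edges of $G$: for each pair $u,v$ with $uv\notin E(G)$ some tree $T_j$ must contain both $u$ and $v$, and such a tree necessarily has at least two edges. Writing $a_1,\dots,a_p$ for the orders of the trees with at least two edges (the \emph{big} trees, $a_j\ge 3$), the identity $m=\sum_{j=1}^{p}(a_j-1)+(\text{number of single-edge classes})$ gives $mc(G)=m-\sum_{j=1}^{p}(a_j-2)$. Thus the upper bound $mc(G)\le m-n+2$ is exactly the inequality $\sum_{j=1}^{p}(a_j-2)\ge n-2$, which I must extract from the fact that the big trees jointly cover every non-edge of $G$.

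The structural core is that these edge-disjoint big trees, covering all non-edges, force $G$ to be very dense unless the excess $\sum_j(a_j-2)$ is large. Let $F=\bigcup_j T_j$ be their union; its components, together with the vertices lying in no big tree, partition $V(G)$ into \emph{parts}, and any two vertices in different parts must be adjacent in $G$. In particular a vertex belonging to no big tree would be adjacent to all others, i.e.\ universal, and more generally a small total excess would force $F$ either to omit vertices or to decompose into few overlapping trees, producing a universal vertex or a large complete-multipartite-type join. Each hypothesis forbids exactly this: a universal vertex makes $\overline{G}$ have an isolated vertex, hence disconnected, ruling out (1), and gives $\mathrm{diam}(G)\le 2$, ruling out (4); a large join cannot occur when $G$ is triangle-free, ruling out (2), or has a cut vertex, ruling out (5); and the degree bound in (3) caps $\Delta(G)$ so that $G$ cannot accommodate the join either. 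Concretely, in each case I would assume $\sum_j(a_j-2)<n-2$ and derive the forbidden substructure, reaching a contradiction.

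The main obstacle is converting this qualitative picture into the sharp inequality $\sum_j(a_j-2)\ge n-2$, and two cases look genuinely delicate. Case (3) is essentially a double-counting estimate: a big tree of order $a_j$ covers at most $\binom{a_j}{2}$ non-edges while contributing only $a_j-2$ to the excess, the number of non-edges is $\binom{n}{2}-m$, and the threshold $\Delta(G)<n-\frac{2m-3(n-1)}{n-3}$ appears calibrated precisely so that this averaging is tight; checking that the stated bound is exactly what the counting demands is where the real computation sits. Case (1) is the other delicate point, since ``$\overline{G}$ is $4$-connected'' is a connectivity statement about the non-edge graph rather than about $G$, so I would need to turn the covering-by-big-trees condition into a contradiction with $4$-connectivity of $\overline{G}$, for instance by exhibiting few vertices whose deletion disconnects $\overline{G}$. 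Cases (2), (4) and (5) I expect to be comparatively routine once the parts structure above is established.
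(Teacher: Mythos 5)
First, a point of context: this paper does not prove the statement at all --- Theorem~\ref{RMD-CY} is quoted from \cite{CY} and used as a black box (for instance, the proof of Proposition~\ref{RMC-mian-1} invokes it to settle the base case $k=1$). So your attempt can only be measured against the standard Caro--Yuster framework, which this paper adopts and generalizes. Your preliminary steps are correct and are exactly that framework: the spanning-tree construction giving $mc(G)\geq m-n+2$, the recoloring argument showing that color classes in an extremal coloring are trees (Proposition~1.3 here), and the reformulation that the upper bound is equivalent to every MC-coloring \emph{wasting} at least $n-2$ colors, i.e.\ $\sum_j(a_j-2)\geq n-2$ over the nontrivial color trees (the paper's notion of waste in Section~2).

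The genuine gap is that you never derive this inequality under any of the five hypotheses, and the structural dichotomy you propose as the bridge is false as stated. You claim that waste less than $n-2$ forces the union $F=\bigcup_j T_j$ ``either to omit vertices or to decompose into few overlapping trees, producing a universal vertex or a large complete-multipartite-type join.'' But small waste is perfectly compatible with $F$ being connected and spanning: for example, $n-3$ trees of order $3$ can have a spanning connected union while each wastes only one color, giving total waste $n-3<n-2$. In that situation there is a single ``part,'' no universal vertex and no join, so your partition argument sees nothing; the real constraint is that every pair of vertices lying in no common nontrivial tree must be adjacent in $G$, \emph{including pairs inside one component of $F$}, and exploiting this requires genuine counting. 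Moreover, the crude count (a tree of order $a_j$ covers at most $\binom{a_j-1}{2}$ non-adjacent pairs, and there are at least $\binom{n}{2}-m$ such pairs) is itself not sufficient: for a triangle-free graph the demand is roughly $n^2/4$, which a single tree of order about $n/\sqrt{2}$ already meets while wasting only about $0.71n$ colors, so even case~(2) needs structural input beyond double counting --- this is precisely the kind of work the paper formalizes in Lemmas~\ref{expression}--\ref{cho-m} and carries out, for general $k$, in the long case analysis of Proposition~\ref{RMC-mian-1}. Since you explicitly defer cases~(1) and~(3) (``where the real computation sits'') and cases~(2), (4), (5) do not follow from your parts structure for the reason above, what you have is the correct and standard setup plus an unsound reduction; the proof of the theorem itself is missing.
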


The Erd$\ddot{o}$s-R$\acute{e}$nyi random graph model $G(n,p)$ will be studied in this paper. The graph $G(n,p)$ is defined on $n$ labeled vertices
(informally, we use $[n]$ to denote the $n$ labeled vertices) in which each edge is chosen independently and randomly with probability $p$.
A {\em property} of graphs is a subset of the set of all graphs on $[n]$ (such as connectivity, minimum degree, et al).
If a property $Q$ has $Pr[Q]\rightarrow 1$ when $n\rightarrow +\infty$, then we call the property $Q$ {\em almost surely}.
A property $Q$ is {\em monotone increasing} if whenever $H$ is a graph obtained from $H'$ by adding some addition edges and $H'$ has property $Q$,
then $H$ also has the property $Q$.

Given two functions $a(n)$ and $b(n)$ with $a(n)\geq 0$ and $b(n)>0$, we write $a(n)=o(b(n))$ if $a(n)/b(n)\rightarrow 0$ when $n\rightarrow \infty$;
$a(n)=O(b(n))$ if there is a constant $C$ such that $a(n)\leq Cb(n)$ for all $n$;
and finally $a(n)=\omega(b(n))$ if $b(n)=o(a(n))$.

A function $h(n)$ is a {\em threshold function} for an increasing property $Q$, if for any two functions $h_1(n)=o(h(n))$ and $h(n)=o(h_2(n))$,
$G(n,h_1(n))$ does not have property $Q$ almost surely and $G(n,h_2(n))$ has property $Q$ almost surely.
Moreover, $h(n)$ is called a {\em sharp threshold function} of $Q$ if there exist two positive constants $c_1$ and $c_2$ such that
$G(n,p(n))$ does not have property $Q$ almost surely when $p(n)\leq c_1h(n)$ and $G(n,p(n))$ has property $Q$ almost surely when $p(n)\geq c_2h(n)$.
It was proved in \cite{EG} that every monotone increasing graph property has a sharp threshold function.
The property monochromatic connection coloring of a graph (and also the properties monochromatic $k$-edge-connection coloring,
uniformly monochromatic $k$-edge-connection coloring and rainbow monochromatic $k$-edge-connection coloring of graphs which are defined later)
is monotone increasing, and therefore it has a sharp threshold function.

\begin{theorem}[\cite{GLQZ}]
Let $f(n)$ be a function satisfying $1\leq f(n)<{n\choose2}$. Then
$$p=
\begin{cases}
\frac{f(n)+n\log \log n}{n^2}, & \mbox{ if } ln\log n\leq f(n)<{n\choose2},\mbox{ where }l\in \mathbb{R}^+;\\
\frac{\log n}{n}, &\mbox{ if }f(n)=o(n\log n).
\end{cases}$$
is a sharp threshold function for the property $mc(G(n,p))\geq f(n)$.
\end{theorem}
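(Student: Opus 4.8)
The plan is to reduce the statement to two ingredients: an almost-sure formula for $mc(G(n,p))$ and a sharp concentration estimate for the number of edges. First I would record the elementary bound $mc(G)\ge m-n+2$, valid for every connected graph: color a spanning tree with a single color and give the remaining $m-(n-1)$ edges pairwise distinct colors. This inequality alone settles the direction ``the property holds,'' since it needs no hypothesis beyond connectivity. For the opposite direction I must instead show $mc(G(n,p))=m-n+2$ almost surely, which I would obtain by verifying that $G(n,p)$ almost surely meets one of the hypotheses of Theorem~\ref{RMD-CY}. Because $f(n)<\binom{n}{2}$, the candidate threshold forces $p$ to stay below $\tfrac12+o(1)$ on the failure side, so it suffices to check these hypotheses for $p\le\tfrac12+o(1)$: near the connectivity threshold the diameter exceeds $2$ almost surely (condition (4)); for larger $p\le\tfrac12$ the maximum-degree condition (3) holds, since $\Delta(G(n,p))\approx np<n(1-p)$ matches the stated bound, and as a backup condition (1) applies because $\overline{G}=G(n,1-p)$ is $4$-connected almost surely. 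Stitching these density regimes together is the first technical point.

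Next I would control $m=|E(G(n,p))|$, a sum of $\binom{n}{2}$ independent indicators with mean $\binom{n}{2}p$. A Chernoff bound yields $\bigl|m-\binom{n}{2}p\bigr|=O(n\sqrt{p})$ almost surely, and at $p$ of the order of the claimed threshold this error is $O\!\left(\sqrt{f(n)+n\log\log n}\right)=o(f(n))$, hence negligible against the constant-factor gaps I intend to exploit. With the exact value $mc=m-n+2$, the event $mc(G(n,p))\ge f(n)$ is equivalent to $m\ge f(n)+n-2$, a pure edge-counting statement. Choosing $p\ge c_2 h(n)$ with a suitable constant $c_2$ makes $\binom{n}{2}p$ exceed $f(n)+n$ by a constant factor, so $m\ge f(n)+n-2$ almost surely; choosing $p\le c_1 h(n)$ with $c_1<2$ makes $\binom{n}{2}p$ fall below $f(n)$, so $m-n+2<f(n)$ almost surely. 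The freedom to use distinct constants $c_1,c_2$ afforded by the sharp-threshold definition is exactly what absorbs the leading factor relating $\binom{n}{2}p$ to $pn^2$.

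The two cases for $f(n)$ are separated by where this edge-count constraint meets connectivity. When $f(n)=o(n\log n)$, already at $p=\tfrac{\log n}{n}$ one has $m-n+2\approx\tfrac12 n\log n\gg f(n)$, so the binding obstruction is connectivity itself: for $p\le c_1\tfrac{\log n}{n}$ the graph is disconnected almost surely and admits no monochromatic connection coloring, whereas for $p\ge c_2\tfrac{\log n}{n}$ it is connected with $mc\gg f(n)$, giving the threshold $\tfrac{\log n}{n}$. When $f(n)\ge \ell n\log n$, the edge count is binding and the threshold is governed by $m\ge f(n)+n-2$, producing $h(n)=\tfrac{f(n)+n\log\log n}{n^2}$.

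The step I expect to be the main obstacle is calibrating the second-order term $n\log\log n$ so that the threshold is genuinely sharp at the boundary $f(n)\asymp n\log n$, i.e.\ the case $\ell=1$. There $h(n)\approx\tfrac{\log n+\log\log n}{n}$, and the property can hold only if $G(n,p)$ is connected; connectivity of $G(n,p)$ requires precisely $np-\log n\to+\infty$, and the summand $\log\log n$ is exactly what supplies this diverging margin when $\ell=1$. Thus the delicate work is to run the connectivity analysis and the edge-concentration analysis together inside this narrow window, ensuring that $p\le c_1 h$ forces either disconnection or too few edges while $p\ge c_2 h$ forces connection together with $m-n+2\ge f(n)$. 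Verifying Theorem~\ref{RMD-CY} uniformly across all densities and handling this near-boundary concentration are where the care is needed; the remaining estimates are routine Chernoff bounds.
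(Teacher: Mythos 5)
Your proposal is essentially correct, but note that the paper never proves this statement: it is imported from \cite{GLQZ} as Theorem 1.2, so the only proof in this paper to measure against is the analogous threshold argument for $rmc_k$ in Section 5. Against that benchmark, your route is genuinely different and noticeably heavier on the failure side. You insist on the exact value $mc(G(n,p))=m-n+2$ almost surely, obtained by verifying hypotheses of Theorem \ref{RMD-CY} across density regimes. Under the paper's definition of a sharp threshold—only two constants $c_1,c_2$ are required—this is unnecessary: the trivial bound $mc(G)\leq e(G)$ already settles failure. Indeed, for $f(n)\geq \ell n\log n$ and $p\leq c_1h(n)$ with $c_1<2$, Chernoff gives $m<f(n)$ almost surely (so $mc\leq m<f(n)$), and for $f(n)=o(n\log n)$ and $p\leq c_1\log n/n$ with $c_1<1$ the graph is disconnected almost surely; the additive slack $n-2$ between $mc\leq m$ and $mc\geq m-n+2$ is swallowed by the constant gap between $c_1$ and $c_2$. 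This counting-only scheme is precisely how the paper proves its own Theorem 5.5: lower bound $rmc_k\geq m-k(n-2)$, the rough upper bound of Proposition \ref{rough-bound} (which for $k=1$ is just $mc\leq m$), Claim \ref{IFF}, Chernoff, and the connectivity/spanning-tree thresholds—no structural characterization at all. What your route buys is stronger information (the a.s.\ exact value of $mc$, which one would need for a $(1\pm\varepsilon)$-type or hitting-time refinement); what the counting route buys is brevity and portability to $rmc_k$, where no analogue of Theorem \ref{RMD-CY} exists.

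Two small repairs to your execution. First, condition (3) of Theorem \ref{RMD-CY} does not hold near $p=\tfrac12$: there $\Delta(G(n,p))=np+\Theta(\sqrt{np\log n})$ exceeds $n-\frac{2m-3(n-1)}{n-3}\approx n(1-p)$, so your stitching must rest on condition (1) alone—which fortunately covers every $p$ bounded away from $1$, since $\overline{G}=G(n,1-p)$ is $4$-connected almost surely whenever $1-p=\Omega(1)$; conditions (3) and (4) are then superfluous. Second, your diagnosis of the $n\log\log n$ term is misplaced: in the regime $f(n)\geq \ell n\log n$ one has $n\log\log n=o(f(n))$, so the term changes $h(n)$ only by a factor $1+o(1)$; the connectivity margin $np-\log n\to\infty$ comes simply from choosing $c_2>\max\{2,1/\ell\}$, which the sharp-threshold definition permits.
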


Now we generalize the concept monochromatic connection coloring of graphs. There are three ways to generalize this concept.

The first generalized concept is called the {\em monochromatic $k$-edge-connection coloring} (or {\em $MC_k$-coloring} for short) of $G$,
which requires that every two distinct vertices of $G$ are connected by at least $k$ edge-disjoint monochromatic paths (allow some of the paths to have different colors).
The {\em monochromatically $k$-edge-connection number} of a connected $G$, denoted by $mc_k(G)$, is the maximum number of colors that are allowed in order to make
$G$ monochromatically $k$-edge-connected.

The second generalized concept is called the {\em uniformly monochromatic $k$-edge-connection coloring} (or {\em $UMC_k$-coloring} for short) of $G$,
which requires that every two distinct vertices of $G$ are connected by at least $k$ edge-disjoint monochromatic paths such that all
these $k$ paths have the same color (note that for different pairs of vertices the paths may have different colors).
The {\em uniformly monochromatically $k$-edge-connection number} of  a connected $G$, denoted by $umc_k(G)$, is the maximum number of colors that are allowed in order to
make $G$ uniformly monochromatically $k$-edge-connected. These two concepts were studied in \cite{LL}.

It is obvious that a graph has an $MC_k$-coloring (or $UMC_k$-coloring) if and only if $G$ is $k$-edge-connected.
We mainly study the third generalized concept in this paper, which is called the {\em rainbow monochromatic $k$-edge-connection coloring}
(or {\em $RMC_k$-coloring} for short) of a connected graph. One can see later, compare the results for $MC$-colorings, $MC_k$-colorings, $UMC_k$-colorings and $RMC_k$-colorings of graphs,
the concept $RMC_k$-coloring has the best form among all the generalized concepts of the $MC$-coloring.

The definition of the third generalized concept goes as follows. For an edge-colored simple graph $G$
(if $G$ has parallel edges but no loops, the following notions are also reasonable),
if for any two distinct vertices $u$ and $v$ of $G$, $G$ has $k$ edge-disjoint monochromatic paths connecting them,
and the colors of these $k$ paths are pairwise differently, then we call such $k$ monochromatic paths {\em $k$ rainbow monochromatic $uv$-paths}.
An edge-colored graph is {\em rainbow monochromatically $k$-edge-connected} if every two vertices of the graph are connected by at least $k$ rainbow monochromatic paths
in the graph. An edge-coloring $\Gamma$ of a connected graph $G$ is a {\em rainbow monochromatic k-edge-connection coloring} (or {\em $RMC_k$-coloring} for short) if it makes $G$ rainbow
monochromatically $k$-edge-connected. The {\em rainbow monochromatically k-edge-connection number} of a connected graph $G$, denoted by $rmc_k(G)$, is the maximum number of colors
that are allowed in order to make $G$ rainbow monochromatically $k$-edge-connected. An {\em extremal $RMC_k$-coloring} of $G$ is an $RMC_k$-coloring that uses
$rmc_k(G)$ colors.

If $k=1$, then an $RMC_k$-coloring (also $MC_k$-coloring and $UMC_k$-coloring) is reduced to a monochromatic connection coloring for any connected graph.

In an edge-colored graph $G$, if a color $i$ only color one edge of $E(G)$, then we call the color $i$ a {\em trivial color}, and call the edge
(tree) a {\em trivial edge} ({\em trivial tree}). Otherwise we call the edges (colors, trees) {\em nontrivial}.
A subgraph $H$ of $G$ is called an {\em $i$-induced subgraph} if $H$ is induced by all the edges of $G$ with the same color $i$. Sometimes, we also
call $H$ a {\em color-induced subgraph}.

If $\Gamma$ is an extremal $RMC_k$-coloring of $G$, then each color-induced subgraph is connected.
Otherwise we can recolor the edges in one of its components by a fresh color, then the new edge-coloring is also
an $RMC_k$-coloring of $G$, but the number of colors is increased by one, which contradicts that $\Gamma$ is extremal.
Furthermore, each color-induced subgraph does not have cycles; otherwise we can recolor one edge in a cycle by a fresh color.
Then the new edge-coloring is also an $RMC_k$-coloring of $G$,
but the number of colors is increased, a contradiction. Therefore, we have the following result.

\begin{proposition}
If $\Gamma$ is an extremal $RMC_k$-coloring of $G$, then each color-induced subgraph is a tree.
\end{proposition}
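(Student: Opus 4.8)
The plan is to argue by contradiction from extremality, exactly along the two recoloring moves sketched just before the statement, and to streamline the verification by first recording one simplifying observation: if several monochromatic paths have pairwise distinct colors, then they are automatically edge-disjoint, since every edge carries a single color and hence cannot lie on two paths of different colors. Consequently, to certify that a candidate edge-coloring is an $RMC_k$-coloring it suffices to exhibit, for each pair $u,v$, a family of $k$ monochromatic $uv$-paths with pairwise distinct colors; the edge-disjointness clause then comes for free. I would use this throughout, so that only the monochromatic-path structure has to be tracked under recoloring.

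First I would show that in an extremal $RMC_k$-coloring $\Gamma$ every color-induced subgraph $H_i$ is connected. Suppose not, and let $C$ be one connected component of a disconnected $H_i$; recolor all edges of $C$ with a brand-new color $i'$. Since $C$ is a proper nonempty part of $H_i$, color $i$ survives on $H_i-C$ while $i'$ is fresh, so the number of colors strictly increases. It remains to see that the new coloring is still an $RMC_k$-coloring. Fix any pair $u,v$ and take its $k$ rainbow monochromatic paths under $\Gamma$. At most one of them has color $i$, and being a connected path it lies entirely inside a single component of $H_i$; thus after recoloring it is monochromatic of color $i'$ (if that component was $C$) or still of color $i$ (otherwise), while all other paths are untouched. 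Hence the same $k$ paths remain monochromatic with pairwise distinct colors, so $u,v$ are still joined by $k$ rainbow monochromatic paths. This contradicts extremality, and each $H_i$ is connected.

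Next I would prove acyclicity. Assuming $H_i$ contains a cycle, pick an edge $e=xy$ on that cycle and recolor $e$ with a fresh color $i'$; the remaining cycle edges keep color $i$, so again the color count strictly increases. To verify the $RMC_k$ property, fix a pair $u,v$ and its $k$ rainbow monochromatic paths. The only one that can be damaged is the unique color-$i$ path $P$, and only if $P$ traverses $e$. In that case I reroute: replace the single edge $e=xy$ of $P$ by the complementary arc $Q$ of the cycle joining $x$ and $y$, which consists entirely of color-$i$ edges. This produces a color-$i$ $uv$-walk avoiding $e$, from which a color-$i$ $uv$-path can be extracted. The rerouted path stays monochromatic of color $i$ and is still distinct in color from the other $k-1$ paths, which are left intact with colors different from $i$ and from $i'$; hence $u,v$ again have $k$ rainbow monochromatic paths. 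This contradicts extremality, so $H_i$ is acyclic.

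Combining the two parts, every color-induced subgraph of an extremal $RMC_k$-coloring is connected and acyclic, i.e.\ a tree. The step I expect to require the most care is the rerouting in the acyclicity argument: one must make sure that after substituting the arc $Q$ for $e$ one still recovers a genuine color-$i$ path, not merely a walk, joining $u$ and $v$, and that the substitution neither disturbs nor duplicates the colors of the remaining $k-1$ paths. The distinctness is immediate because the reroute never leaves color $i$ while the other paths are untouched with colors different from $i$, and extracting a path from the resulting monochromatic walk is routine.
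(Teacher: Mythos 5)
Your proof is correct and follows essentially the same approach as the paper: the paper's argument is exactly the two recoloring moves (recolor a component of a disconnected color class, or one edge of a cycle, with a fresh color), which you carry out and verify in detail. Your opening observation that pairwise distinct colors already force edge-disjointness, and the explicit rerouting around the recolored cycle edge, are useful elaborations of steps the paper leaves implicit, but they do not change the underlying argument.
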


If $\Gamma$ is an extremal $RMC_k$-coloring of $G$ for $i\in \Gamma(G)$, we call an $i$-induced subgraph of $G$
an {\em $i$-induced tree} or a {\em color-induced tree}. We also call it a tree sometimes if there is no confusion.

The paper is organized as follows. Section $2$ will give some preliminary results. In Section $3$,
we study the existence of $RMC_k$-colorings of graphs. In Section $4$, we give some bounds of $rmc_k(G)$,
and present some graphs whose $rmc_k(G)$ reaches the lower bound. In Section $5$, we obtain the threshold function for
$rmc_k(G)\geq f(n)$, where $\lfloor\frac{n}{2}\rfloor> k\geq 1$.

\section{Preliminaries}

Suppose that $a=(a_1,\cdots,a_q)$ and $b=(b_1,\cdots,b_p)$ are two positive integer sequences
whose lengths $p$ and $q$ may be different. Let $\prec$ be the {\em lexicographic order} for integer sequences,
i.e., $a\prec b$ if for some $h\geq1$, $a_j=b_j$ for $j< h$ and $a_h<b_h$, or $p>q$ and $a_j=b_j$ for $j\leq q$.

Let $D,n,s$ be integers with $n\geq 5$ and $1\leq s\leq n-4$.
Let $r$ be an integer satisfying $D<r{n-s\choose 2}$.
For an integer $t\geq r$,
suppose $f(\mathbf{x}_t)=f(x_1,\cdots,x_t)=\sum_{i\in[t]}{x_i-1\choose 2}$ and $g(\mathbf{x}_t)=g(x_1,\cdots,x_t)=\sum_{i\in[t]}(x_i-2)$,
where $x_i\in\{3,4,\cdots,n-s\}$. We use $\mathcal{S}_t$ to denote the set of optimum solutions of the following problem:
\begin{align*}
&\min & &g(\mathbf{x}_t)\\
&s.t. & &f(\mathbf{x}_t)\geq D \mbox{ and }x_i\in\{3,\cdots,n-s\}\mbox{ for each }i\in [t].
\end{align*}

\begin{lemma} \label{expression}
There are integers $r,x$ with $r\leq t$ and $3\leq x< n-s$, such that the above problem has a solution $\mathbf{x}_t=(x_1,\cdots,x_t)$
in $\mathcal{S}_t$ satisfying that $x_i=n-s$ for $i\in[r-1]$, $x_{r}=x$ and $x_j=3$ for $j\in\{r+1,\cdots,t\}.$
\end{lemma}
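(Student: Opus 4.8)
The plan is to start from an arbitrary optimal solution and massage it, by local unit exchanges, into one of the prescribed shape, using the convexity of $\binom{x-1}{2}$ as the driving force and the linearity of $g$ to keep the cost under control. Two elementary facts get the argument going. First, both $f$ and $g$ are symmetric in $(x_1,\dots,x_t)$, so reordering the coordinates changes neither the objective nor the constraint; hence I may assume that the optimal solution I work with is sorted, $x_1\ge x_2\ge\cdots\ge x_t$. Second, I would compute the effect of a single unit shift: if $x_a>3$ and $x_b<n-s$ with $x_b\ge x_a$, then replacing $(x_a,x_b)$ by $(x_a-1,\,x_b+1)$ leaves $g$ unchanged (it adds $-1$ and $+1$) while changing $f$ by
\[
\Big[\binom{x_b}{2}-\binom{x_b-1}{2}\Big]-\Big[\binom{x_a-1}{2}-\binom{x_a-2}{2}\Big]=(x_b-1)-(x_a-2)=x_b-x_a+1\ge 1 .
\]
Thus such a shift keeps the point feasible and optimal while spreading two coordinates farther apart; this is exactly the majorization/convexity phenomenon that makes concentrating the mass at the extreme values $3$ and $n-s$ efficient.

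Next I would apply this exchange repeatedly as long as the sorted solution has two coordinates lying strictly inside $\{3,\dots,n-s\}$. Taking the largest such coordinate as $x_b$ and the smallest as $x_a$ gives $x_b\ge x_a$ with $x_a>3$ and $x_b<n-s$, so the shift is admissible. Because $f$ is integer-valued and bounded above by $t\binom{n-s-1}{2}$ while strictly increasing at each step, the process halts after finitely many exchanges, and at termination at most one coordinate is strictly between $3$ and $n-s$. Re-sorting, the resulting optimal solution has the shape $(n-s,\dots,n-s,\,x,\,3,\dots,3)$: a block of coordinates equal to $n-s$, at most one transition coordinate $x$, and a block equal to $3$. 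Reading off $r$ as one more than the number of coordinates equal to $n-s$, and $x$ as the transition value, yields precisely the asserted form $x_i=n-s$ for $i\in[r-1]$, $x_r=x$, and $x_j=3$ for $j\in\{r+1,\dots,t\}$.

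Finally I would certify that the extracted parameters lie in the stated ranges, and this is where the hypotheses $D<r\binom{n-s}{2}$ and $t\ge r$ enter. Minimality of $g$ forces the optimal value of $f$ to sit only just above $D$: if some coordinate exceeding $3$ could be lowered while still keeping $f\ge D$, then $g$ would drop, contradicting optimality, so in fact $f< D+(n-s-2)$. Feeding this slack bound into $D<r\binom{n-s}{2}$, together with $t\ge r$, is what I would use to bound the number of coordinates equal to $n-s$ (so that $r\le t$) and to guarantee that the surviving transition coordinate stays strictly below the maximum (so that $3\le x<n-s$). I expect this arithmetic bookkeeping to be the main obstacle: the exchange step is forced by convexity and needs no cleverness, whereas certifying that the block of $(n-s)$'s is short enough and that a genuine transition coordinate below $n-s$ actually survives requires carefully combining the optimality slack with the inequality relating $D$, $r$, $t$ and $\binom{n-s}{2}$, and in particular ruling out the degenerate all-maximal configuration.
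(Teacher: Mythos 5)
Your core argument is correct and is essentially the paper's own proof in different packaging. The paper takes the lexicographically maximal element $\mathbf{c}_t$ of $\mathcal{S}_t$ (which is automatically sorted in decreasing order) and applies exactly your unit shift once, replacing $(c_r,c_j)$ with $c_r\geq c_j$ by $(c_r+1,c_j-1)$, which preserves $g$ and increases $f$ by $c_r-c_j+1\geq 1$, contradicting lexicographic maximality; you instead iterate the same shift and obtain termination from the strict increase of the bounded integer quantity $f$. Extremal choice versus monotone iteration is an inessential difference: the exchange computation, the use of symmetry to sort, and the resulting shape $(n-s,\dots,n-s,x,3,\dots,3)$ are identical.

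The genuine gap is the step you explicitly deferred: showing the terminal optimum is not the all-maximal vector, so that $r\leq t$ and a coordinate $x$ with $3\leq x<n-s$ actually exists. The ``arithmetic bookkeeping'' you hope for does not exist, because under the stated hypotheses the all-maximal configuration can genuinely be the unique optimum. Take $D=t\binom{n-s-1}{2}$; then $D<t\binom{n-s}{2}$, so the ambient hypotheses hold (with the ambient $r$ equal to $t$), yet every feasible point satisfies $f(\mathbf{x}_t)\leq t\binom{n-s-1}{2}=D$ with equality if and only if every $x_i=n-s$. Hence $\mathcal{S}_t=\{(n-s,\dots,n-s)\}$ and no optimum of the prescribed form exists; the same happens for every integer $D$ with $t\binom{n-s-1}{2}-(n-s-2)<D\leq t\binom{n-s-1}{2}$. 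The root cause is the mismatch between $\binom{n-s}{2}$ in the hypothesis on $D$ and $\binom{(n-s)-1}{2}$, the largest value a single summand of $f$ can attain. In fairness, the paper's proof disposes of this very point with the single unjustified sentence ``Since $D<t\binom{n-s}{2}$, there is an integer $r\leq t$ such that $c_i=n-s$ for $i\leq r-1$ and $3\leq c_i<n-s$ for $i\in\{r,\dots,t\}$,'' which fails on the same example; so you correctly located the only delicate step, but closing it requires amending the lemma itself (strengthening the hypothesis to $D\leq t\binom{n-s-1}{2}-(n-s-2)$, or weakening the conclusion to allow $x=n-s$), not further estimates from the slack bound $f<D+(n-s-2)$.
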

\begin{proof}
Let $\mathbf{c}_t=(c_1,\cdots,c_t)$ be a maximum integer sequence of $\mathcal{S}_t$.
Then $c_i\geq c_{i+1}$ for $i\in[t-1]$.
Since $D<t{n-s\choose 2}$, there is an integer $r\leq t$ such that $c_i=n-s$ for $i\leq r-1$ and $3\leq c_i<n-s$ for $i\in\{r,\cdots,t\}$.
Let $x=c_r$. Then $3\leq x< n-s$.
We need to show $c_i=3$ for each $i\in\{r+1,\cdots,t\}$.
Otherwise, suppose $j$ is the maximum integer of $\{r+1,\cdots,t\}$ with $n-s>c_j>3$.
Let $\mathbf{d}_t=(d_1,\cdots,d_t)$, where $d_i=c_i$ when $i\notin \{r,j\}$,
$d_{r}=c_{r}+1$ and $d_j=c_j-1$.
Then $f(\mathbf{d}_t)\geq f(\mathbf{c}_t)\geq D$, $3\leq d_i< n-s$ for each $i\in[t]$, and $g(\mathbf{c}_t)=g(\mathbf{d}_t)$.
i.e., $\mathbf{d}_t\in \mathcal{S}_t$.
However, $\mathbf{c}_t\prec\mathbf{d}_t$, which contradicts that $\mathbf{c}_t$ is a maximum integer sequence of $\mathcal{S}_t$.
\end{proof}

\begin{lemma} \label{expression-1}
Suppose $t\geq r$, $\mathbf{a}_t\in \mathcal{S}_t$ and $\mathbf{b}_r\in \mathcal{S}_r$. Then $g(\mathbf{b}_r)\leq g(\mathbf{a}_t)$.
\end{lemma}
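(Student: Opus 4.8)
The plan is to deduce the inequality from a monotonicity statement: the optimal value of $g$ over $\mathcal{S}_m$ is non-decreasing in the number $m$ of variables, for all $m\geq r$. Since $\mathbf{b}_r\in\mathcal{S}_r$ and $\mathbf{a}_t\in\mathcal{S}_t$ realize the optimal values for $m=r$ and $m=t$, it suffices to establish the one-step inequality $\min_{\mathcal{S}_m}g\leq\min_{\mathcal{S}_{m+1}}g$ for every $m$ with $r\leq m<t$, and then chain these from $m=r$ up to $m=t-1$. I first record that $\mathcal{S}_m\neq\varnothing$ throughout this range: because $\mathbf{b}_r\in\mathcal{S}_r$, the all-maximal sequence witnesses $r{n-s-1\choose 2}\geq D$, hence $m{n-s-1\choose 2}\geq D$ for every $m\geq r$, so the all-maximal $m$-sequence is feasible.

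For the one-step inequality I would fix $m$ with $r\leq m<t$ and, invoking Lemma \ref{expression}, take a canonical optimal $(m+1)$-solution $\mathbf{a}=((n-s)^{\rho-1},x,3^{m+1-\rho})$ with $3\leq x<n-s$ (here $\rho\leq m+1$ is the threshold produced by that lemma, and the superscripts denote the number of repeated entries). The central device is a \emph{merge}: replace two entries $p,q$, each below $n-s$, by the single entry $p+q-2$, provided $p+q-2\leq n-s$. A short computation shows this leaves $g$ unchanged (both contributions equal $p+q-4$) and increases $f$ by exactly $(p-2)(q-2)\geq 1$, so it preserves feasibility while lowering the number of variables by one. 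In the canonical form the entries below $n-s$ are precisely the single entry $x$ together with the $3$'s, and any two of these are mergeable, since $3+3-2=4\leq n-s$ and $3+x-2=x+1\leq n-s$. Hence, whenever $\mathbf{a}$ has at least two entries below $n-s$, one merge yields a feasible $m$-solution with the same $g$-value, giving $\min_{\mathcal{S}_m}g\leq g(\mathbf{a})=\min_{\mathcal{S}_{m+1}}g$.

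The hard part will be the remaining configuration, where $\mathbf{a}$ has only one entry below $n-s$, i.e.\ $\mathbf{a}=((n-s)^m,x)$; here no merge is available. I would resolve it by deleting the entry $x$ to obtain the all-maximal $m$-sequence $((n-s)^m)$, whose $g$-value is $m(n-s-2)\leq m(n-s-2)+(x-2)=g(\mathbf{a})$ and which is feasible exactly because $m\geq r$ forces $m{n-s-1\choose 2}\geq D$, as noted above; thus again $\min_{\mathcal{S}_m}g\leq g(\mathbf{a})$. Combining the two cases gives the one-step inequality, and chaining yields $g(\mathbf{b}_r)\leq g(\mathbf{a}_t)$. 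The subtle point to watch is that the clean two-case split genuinely requires the canonical form of Lemma \ref{expression}: for an arbitrary optimal solution two intermediate entries (e.g.\ both strictly between $3$ and $n-s$) could fail to be mergeable and yet not form an all-but-one-maximal sequence, so reducing to the canonical shape first is what makes the argument go through.
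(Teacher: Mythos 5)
Your proposal is correct and takes essentially the same approach as the paper: both invoke the canonical form of Lemma \ref{expression} and chain a one-step monotonicity obtained by converting an optimal $(m+1)$-entry solution into a feasible $m$-entry solution with the same $g$-value, and your merge of the entry $x$ with a trailing $3$ (producing $x+1$) is literally the paper's operation of deleting a trailing $3$ while incrementing the middle entry. Your explicit second case (deleting the unique sub-maximal entry of $((n-s)^m,x)$) and your derivation of feasibility from $\mathcal{S}_r\neq\varnothing$ are careful refinements of steps the paper disposes of by simply asserting $l<t$ and $a_t=3$.
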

\begin{proof}
The result holds for $t=r$, so let $t>r$.
W.l.o.g., suppose $\mathbf{a}_t=(a_1,\cdots,a_t)$, where $a_1=\cdots=a_{l-1}=n-s$, $3\leq a_l< n-s$ and $a_{l+1}=\cdots=x_t=3$.
Since $t>r$ and $D<r{n-s\choose 2}$, $l<t$ and $a_t=3$.
Let $\mathbf{c}_{t-1}=(c_1,\cdots,c_{t-1})$, where $c_1=\cdots=c_{l-1}=n-s$, $c_l=a_l+1$ and $c_{l+1}=\cdots=x_{t-1}=3$.
Then $f(\mathbf{c}_{t-1})\geq D$ and $g(\mathbf{c}_{t-1})= g(\mathbf{a}_t)$.
Let $\mathbf{d}_{t-1}\in \mathcal{S}_{t-1}$.
Then $g(\mathbf{c}_{t-1})\geq g(\mathbf{d}_{t-1})$.
By induction on $t-r$, $g(\mathbf{b}_r)\leq g(\mathbf{d}_{t-1})$.
Thus $g(\mathbf{b}_r)\leq g(\mathbf{a}_t)$.
\end{proof}

The following result is easily seen.
\begin{lemma}\label{cho-m}
If $a,b,c$ are positive integers with $c+a-1\geq 2$ and $a+b=c$, then ${c\choose 2}-{a\choose 2}\geq b$.
\end{lemma}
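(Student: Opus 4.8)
The plan is to prove the inequality by a single direct algebraic manipulation of the two binomial coefficients, since this is a purely arithmetic statement with no combinatorial content. First I would expand both terms and write
\[
{c\choose 2}-{a\choose 2}=\frac{c(c-1)-a(a-1)}{2}=\frac{(c^2-a^2)-(c-a)}{2},
\]
and then factor the numerator as $(c-a)(c+a)-(c-a)=(c-a)(c+a-1)$, so that ${c\choose 2}-{a\choose 2}=\tfrac{1}{2}(c-a)(c+a-1)$.

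Next I would use the first hypothesis $a+b=c$, which gives $c-a=b$, to rewrite this as ${c\choose 2}-{a\choose 2}=\tfrac{1}{2}\,b\,(c+a-1)$. At this point the claimed inequality ${c\choose 2}-{a\choose 2}\geq b$ becomes $\tfrac{1}{2}\,b\,(c+a-1)\geq b$. Since $b$ is a positive integer, this is equivalent to $c+a-1\geq 2$, which is exactly the second hypothesis; concretely, $\tfrac{1}{2}\,b\,(c+a-1)\geq \tfrac{1}{2}\,b\cdot 2=b$, and the proof is complete.

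There is essentially no obstacle here: the argument is a one-line computation, and the only care needed is to perform the factorization correctly and to invoke both hypotheses in the right roles — the relation $a+b=c$ to convert the factor $c-a$ into $b$, and the bound $c+a-1\geq 2$ together with $b\geq 1$ to discharge the final inequality. (One may also remark that, since $a,b,c$ are positive integers with $a+b=c$, the condition $c+a-1\geq 2$ is automatic, but it is cleanest simply to use it as given.)
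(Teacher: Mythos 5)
Your proof is correct: the factorization ${c\choose 2}-{a\choose 2}=\tfrac{1}{2}(c-a)(c+a-1)=\tfrac{1}{2}b(c+a-1)\geq b$ is exactly the routine computation the lemma calls for, and you invoke both hypotheses in the right places. The paper itself gives no proof (it only remarks the result "is easily seen"), so your direct one-line verification is precisely the intended argument; your closing observation that $c+a-1\geq 2$ is automatic for positive integers with $a+b=c$ is also accurate.
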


Suppose $X$ is a proper vertex set of $G$.
We use $E(X)$ to denote the set of edges whose ends are in $X$.
For a graph $G$ and $X\subseteq V(G)$, to shrink $X$ is to delete $E(X)$ and then merge the vertices of $X$ into a single vertex.
A partition of the vertex set $V$ is to divide $V$ into some mutual disjoint nonempty sets.
Suppose $\cP=\{V_1,\cdots,V_s\}$ is a partition of $V(G)$.
Then $G/\cP$ is a graph obtained from $G$ by shrinking every $V_i$ into a single vertex.

The spanning tree packing number (STP number) of a graph is the maximum number of edge-disjoint spanning trees contained in the graph.
We use $T(G)$ to denote the number of edge-disjoint spanning trees of $G$.
The following theorem was proved by Nash-Williams and Tutte independently.
\begin{theorem}[\cite{NW}~\cite{T}] \label{N-W-T}
A graph $G$ has at least $k$ edge-disjoint spanning trees if and only if $e(G/\cP)\geq k(|G/\cP|-1)$ for any vertex-partition $\cP$ of $V(G)$.
\end{theorem}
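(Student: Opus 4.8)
The plan is to prove the two implications separately, with essentially all of the work falling in the sufficiency direction.

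Necessity ($\Rightarrow$) is a short counting argument. Suppose $T_1,\dots,T_k$ are edge-disjoint spanning trees of $G$, and let $\cP=\{V_1,\dots,V_r\}$ be any vertex-partition, so $|G/\cP|=r$. Contracting a connected spanning subgraph leaves it connected and spanning, so each $T_i$ descends to a connected spanning subgraph of $G/\cP$ and therefore uses at least $r-1$ of the cross edges. Since the $T_i$ are pairwise edge-disjoint, these contributions are disjoint, and summing gives $e(G/\cP)\geq k(r-1)=k(|G/\cP|-1)$.

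For sufficiency ($\Leftarrow$) I would pass to the cycle (graphic) matroid $M=M(G)$ on ground set $E(G)$, whose rank function is $\rho(F)=n-c(F)$, where $c(F)$ denotes the number of components of the spanning subgraph $(V,F)$. The key reformulation is that $G$ has $k$ edge-disjoint spanning trees if and only if the $k$-fold matroid union $M^{\vee k}$ has rank $k(n-1)$: a base of the union of size $k(n-1)$ partitions into $k$ forests whose sizes sum to $k(n-1)$, and since each forest has at most $n-1$ edges this forces every one of them to be a spanning tree. I would then invoke the matroid union (Nash-Williams / Edmonds) rank formula
$$\operatorname{rank}(M^{\vee k})=\min_{F\subseteq E}\bigl(|E\setminus F|+k\,\rho(F)\bigr),$$
so that (using that $G$ must be connected, whence the value $k(n-1)$ is attained at $F=E$) the rank equals $k(n-1)$ precisely when $|E\setminus F|\geq k(c(F)-1)$ holds for every $F\subseteq E$. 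It then remains to check that this family of edge-set inequalities is equivalent to the stated family of partition inequalities. Given $F$, take $\cP$ to be the partition of $V$ into the components of $(V,F)$; every cross edge of $\cP$ lies in $E\setminus F$, so $|E\setminus F|\geq e(G/\cP)\geq k(|G/\cP|-1)=k(c(F)-1)$. Conversely, given $\cP=\{V_1,\dots,V_r\}$, take $F$ to be the set of all non-cross edges; then $E\setminus F$ is exactly the cross-edge set, $c(F)\geq r$, and the matroid inequality gives $e(G/\cP)=|E\setminus F|\geq k(c(F)-1)\geq k(r-1)$. This closes the equivalence and hence the theorem.

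The genuine content is concentrated entirely in the matroid union rank formula; the surrounding reductions are bookkeeping. I expect that to be the main obstacle, in the sense that it is the one nontrivial external input. If one prefers a self-contained route that avoids citing matroid union, the alternative is a direct extremal argument: take edge-disjoint forests $F_1,\dots,F_k$ maximizing $\sum_i|E(F_i)|$, assume this maximum is below $k(n-1)$, and run an augmenting-path analysis in an auxiliary graph that records single-edge exchanges $F_i\mapsto F_i+e-e'$. The delicate step there is converting the non-existence of an augmentation into an explicit vertex-partition whose number of cross edges is strictly less than $k(|G/\cP|-1)$, contradicting the hypothesis; this is exactly the point where the structural fact that a maximal forest on a vertex set spans the induced connected pieces must be exploited, and I would expect it to be where the bulk of the care is needed.
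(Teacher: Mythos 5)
The paper never proves this statement: it is quoted as the classical packing theorem of Nash-Williams and Tutte, cited to their 1961 papers, and then used as a black box (notably in the proof of Theorem \ref{SN}). So there is no internal proof to compare yours against; I can only assess your argument on its own terms, and it is correct. The necessity direction is the standard disjoint-counting argument and is complete. The sufficiency direction is the standard modern derivation: pass to the cycle matroid $M(G)$, observe that $G$ has $k$ edge-disjoint spanning trees exactly when the $k$-fold union $M^{\vee k}$ has rank $k(n-1)$, invoke the matroid union rank formula, and translate the resulting family of inequalities $|E\setminus F|\geq k(c(F)-1)$ into the partition inequalities. Your two translations (components of $(V,F)$ in one direction, the non-cross edges of $\mathcal{P}$ in the other) are exactly right; note also that the inequality at $F=E$ reads $0\geq k(c(E)-1)$ and thus by itself forces $G$ to be connected, so your parenthetical about connectivity needs no separate justification. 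The one caveat is the one you flag yourself: all of the depth is delegated to the matroid union theorem of Nash-Williams and Edmonds. This is not circular, since matroid union admits proofs independent of tree packing, but it makes your proof a reduction rather than a self-contained argument; the original proofs of Nash-Williams and Tutte, and the usual textbook treatments, are direct exchange/augmentation arguments along the lines of the alternative you sketch at the end, and you correctly identify that the real work there lies in converting non-augmentability into a partition violating the hypothesis.
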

We denote $\tau(G)=\min_{|\cP|\geq2}\frac{e(G/\cP)}{|G/\cP|-1}$.
Then Nash-Williams-Tutte Theorem can be restated as follows.
\begin{theorem} \label{N-T}
$T(G)=k$ if and only if $\lfloor\tau(G)\rfloor=k$.
\end{theorem}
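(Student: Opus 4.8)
The plan is to recognize that the claimed biconditional is nothing more than a normalized restatement of the Nash--Williams--Tutte Theorem (Theorem~\ref{N-W-T}), read off through the floor function. Since $T(G)$ is \emph{defined} as the maximum number of edge-disjoint spanning trees, the whole content lives in Theorem~\ref{N-W-T}, and my task is purely to convert the family of inequalities there into a single statement about $\lfloor\tau(G)\rfloor$.

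First I would observe that for any vertex-partition $\cP$ with $|\cP|\geq 2$ we have $|G/\cP|-1=|\cP|-1\geq 1>0$, so the defining inequality of Theorem~\ref{N-W-T} may be divided by this positive quantity without reversing its direction, giving $\frac{e(G/\cP)}{|G/\cP|-1}\geq k$. Demanding this for \emph{every} such $\cP$ is precisely demanding $\tau(G)=\min_{|\cP|\geq 2}\frac{e(G/\cP)}{|G/\cP|-1}\geq k$. Hence for each integer $k$ the Nash--Williams--Tutte condition is equivalent to $\tau(G)\geq k$, and by Theorem~\ref{N-W-T} this is in turn equivalent to $G$ having at least $k$ edge-disjoint spanning trees, i.e. to $T(G)\geq k$. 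This establishes the key equivalence
\[ T(G)\geq k \ \Longleftrightarrow\ \tau(G)\geq k \qquad\text{for every integer }k. \]

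Next I would pin $\tau(G)$ between consecutive integers by feeding in the two extreme choices $k=T(G)$ and $k=T(G)+1$. Taking $k=T(G)$ yields $\tau(G)\geq T(G)$; taking $k=T(G)+1$, the left side $T(G)\geq T(G)+1$ is false, so $\tau(G)\geq T(G)+1$ must also fail, i.e. $\tau(G)<T(G)+1$. Combining, $T(G)\leq\tau(G)<T(G)+1$, which is exactly $\lfloor\tau(G)\rfloor=T(G)$. The asserted biconditional then follows immediately: $T(G)=k$ holds if and only if $\lfloor\tau(G)\rfloor=k$.

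I do not expect any genuine obstacle here, since the depth is entirely in the cited Theorem~\ref{N-W-T}. The only points meriting care are that the strict positivity of $|G/\cP|-1$ legitimizes both dividing the inequality and passing to the minimum, and that the floor identity should be deduced from the integer-indexed equivalence above rather than by manipulating the rational number $\tau(G)$ directly.
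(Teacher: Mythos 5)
Your proof is correct and takes exactly the route the paper intends: the paper offers no separate proof of Theorem~\ref{N-T}, asserting it as a direct restatement of Theorem~\ref{N-W-T}, and your argument merely makes the routine details of that restatement explicit (dividing the Nash--Williams--Tutte inequality by the positive quantity $|G/\cP|-1$ to get $T(G)\geq k \Leftrightarrow \tau(G)\geq k$ for every integer $k$, then sandwiching $T(G)\leq\tau(G)<T(G)+1$ to conclude $\lfloor\tau(G)\rfloor=T(G)$). There is no gap; this is the same approach, carefully written out.
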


If $\Gamma$ is an extremal $RMC_k$-coloring of $G$,
then we say that $\Gamma$ {\em wastes} $\omega=\sum_{i\in[r]}(|T_i|-2)$ colors,
where $T_1,\cdots,T_r$ are all the nontrivial color-induced trees of $G$.
Thus $rmc_k(G)=m-\omega$.

Suppose that $\Gamma$ is an edge-coloring of $G$ and $v$ is a vertex of $G$.
The {\em nontrivial color degree} of $v$ under $\Gamma$ is denoted by $d^n(v)$,
that is, the number of nontrivial colors appearing on the edges incident with $v$.
\begin{lemma}
Suppose that $\Gamma$ is an $RMC_k$-coloring of $G$ with $k\geq2$.
Then $d^n(v)\geq k$ for every vertex $v$ of $G$.
\end{lemma}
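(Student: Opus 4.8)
The plan is to argue by contradiction: I would assume that some vertex $v$ has $d^n(v)\le k-1$ and then exhibit a vertex $u$ to which $v$ is joined by fewer than $k$ rainbow monochromatic paths, contradicting that $\Gamma$ is an $RMC_k$-coloring.

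The first key observation I would record is how trivial colors behave along monochromatic paths: if a color $i$ is trivial then it colors a single edge, so any monochromatic path whose color is $i$ must be exactly that one edge. Consequently, for any $u\neq v$, every rainbow monochromatic $vu$-path either carries one of the nontrivial colors incident with $v$, or else it is the single trivial edge $vu$ itself. Since a family of rainbow monochromatic paths uses pairwise distinct colors, and the color of each path of the first kind is the (nontrivial) color of its first edge, which is incident with $v$, the paths of the first kind number at most $d^n(v)$; and in a simple graph there is at most one edge $vu$, so there is at most one path of the second kind. Hence the total number of rainbow monochromatic $vu$-paths is at most $d^n(v)+1\le k$.

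Next I would remove the stray ``$+1$'' by choosing $u$ suitably. If there is a vertex $u\neq v$ that is not joined to $v$ by a trivial edge (that is, $u$ is non-adjacent to $v$, or the edge $vu$ is nontrivial), then no path of the second kind exists, so $v$ and $u$ are connected by at most $d^n(v)\le k-1$ rainbow monochromatic paths, a contradiction. Otherwise every vertex other than $v$ is joined to $v$ by a trivial edge; then every edge incident with $v$ is trivial, so $d^n(v)=0$, and for any $u\neq v$ there is at most one rainbow monochromatic $vu$-path, which, since $k\ge 2$, again contradicts the $RMC_k$-property. In either case the assumption fails, and therefore $d^n(v)\ge k$.

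I expect the only real subtlety to be the bookkeeping around trivial colors and the direct edge $vu$: this is exactly where the hypothesis $k\ge 2$ enters, since it is needed precisely in the degenerate case where all edges at $v$ are trivial, and it is what forces the case split rather than a one-line count. For $k=1$ the statement indeed fails (a star colored with all distinct colors has a center of nontrivial color degree $0$), so the argument must genuinely use $k\ge 2$.
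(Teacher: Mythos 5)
Your proof is correct and follows essentially the same route as the paper's: both rest on the observation that a trivially colored monochromatic path must be a single edge, so that rainbow monochromatic paths leaving $v$ contribute pairwise distinct nontrivial colors counted by $d^n(v)$, with simplicity of $G$ and $k\geq 2$ ruling out the degenerate case in which every edge at $v$ is trivial. The only real difference is organizational: the paper argues directly (first proving $d^n(v)\geq 1$ to obtain a nontrivial edge $vu$, then counting the $k$ rainbow monochromatic $uv$-paths), while you argue by contradiction and take the witness $u$ to be any vertex not joined to $v$ by a trivial edge, which incidentally sidesteps the small distinctness check (that $\Gamma(vu)$ differs from the colors of the $k-1$ paths of order at least three) that the paper leaves implicit.
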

\begin{proof}
Since every two vertices have $k\geq 2$ rainbow monochromatic paths connecting them and $G$ is simple,
every two vertices have at least one nontrivial monochromatic path connecting them,
i.e., $d^n(v)\geq 1$ for each $v\in V(G)$.
Let $e=vu$ be a nontrivial edge.
Then there are $k-1$ rainbow monochromatic paths of order at least three connecting $u$ and $v$.
Since these $k-1$ rainbow monochromatic paths are nontrivial, $d^n(v)\geq k$ for each $v\in V(G)$.
\end{proof}

\section{Existence of $RMC_k$-colorings}

We knew that there exists an $MC_k$-coloring or a $UMC_k$-coloring of $G$ if and only if $G$ is $k$-edge-connected.
It is natural to ask how about $RMC_k$-colorings ?
It is obvious that any cycle of order at least $3$ is $2$-edge-connected,
but it does not have an $RMC_2$-coloring.

We mainly think about simple graphs in this paper,
but in the following result, all graphs may have parallel edges but no loops.
\begin{theorem} \label{SN}
A graph $G$ has an $RMC_k$-coloring if and only if $\tau(G)\geq k$.
\end{theorem}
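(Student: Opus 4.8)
The plan is to run everything through the Nash--Williams--Tutte machinery of Theorem~\ref{N-W-T} and Theorem~\ref{N-T}. Since $k$ is an integer, $\tau(G)\geq k$ is equivalent to $T(G)\geq k$, i.e. to $G$ having $k$ edge-disjoint spanning trees, which by Theorem~\ref{N-W-T} is equivalent to the partition inequality $e(G/\cP)\geq k(|G/\cP|-1)$ holding for \emph{every} vertex-partition $\cP$ of $V(G)$. I prove the two implications separately.

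For sufficiency ($\tau(G)\geq k\Rightarrow RMC_k$-coloring), take the $k$ edge-disjoint spanning trees $T_1,\dots,T_k$ supplied by Theorem~\ref{N-T}. Assign color $i$ to every edge of $T_i$, and color $1$ to every edge lying in no $T_i$. For any two vertices $u,v$ and each $i\in[k]$, the unique $uv$-path inside $T_i$ is monochromatic of color $i$; these $k$ paths carry pairwise distinct colors, and distinct colors automatically force edge-disjointness, so they form $k$ rainbow monochromatic $uv$-paths. Hence this coloring is an $RMC_k$-coloring.

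For necessity I reduce to a single counting lemma. First, contraction preserves the property: if $\Gamma$ is an $RMC_k$-coloring of $G$ and $\cP$ is any vertex-partition, then keeping on $G/\cP$ the colors of the edges crossing between blocks gives an $RMC_k$-coloring of $G/\cP$. Indeed, for distinct blocks $V_i,V_j$, pick $u\in V_i$, $v\in V_j$ and project the $k$ rainbow monochromatic $uv$-paths of $G$; each projects to a monochromatic walk joining the two contracted vertices, these $k$ walks have pairwise distinct colors, and each contains a monochromatic path, so the contracted vertices are joined by $k$ rainbow monochromatic paths. (Within-block edges become loops and are discarded by shrinking, so $G/\cP$ is loopless, consistent with the multigraph setting of this section.) Thus it suffices to prove the lemma: \emph{if a loopless (multi)graph $H$ on $s$ vertices has an $RMC_k$-coloring, then $e(H)\geq k(s-1)$}; applying it to every $H=G/\cP$ yields the Nash--Williams--Tutte inequality, hence $\tau(G)\geq k$.

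To prove the lemma, let $H_1,\dots,H_t$ be the color classes, so $e(H)=\sum_c e(H_c)$, and for each color $c$ let $\Pi_c$ be the partition of $V(H)$ whose blocks are the vertex sets of the connected components of $H_c$ (isolated vertices becoming singletons). Since a graph with a prescribed number of components has at least (vertices $-$ components) edges, $e(H_c)\geq s-|\Pi_c|=:r_c$, giving $e(H)\geq\sum_c r_c$. The $RMC_k$-condition says precisely that every pair $u,v$ lies in a common block of at least $k$ of the $\Pi_c$. Fix any spanning tree $T$ of the complete graph on $V(H)$ (it has $s-1$ edges), and for an edge $e$ of $T$ let $w(e)$ be the number of colors $c$ for which $e$ lies inside a block of $\Pi_c$; then $w(e)\geq k$. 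For a single color $c$, contracting $T$ along $\Pi_c$ gives a connected graph on $|\Pi_c|$ vertices, so at least $|\Pi_c|-1$ tree edges run between blocks, leaving at most $(s-1)-(|\Pi_c|-1)=r_c$ tree edges inside blocks of $\Pi_c$. Double-counting now gives
$$k(s-1)\leq\sum_{e\in T}w(e)=\sum_{c}\bigl|\{e\in T:\ e\text{ inside a block of }\Pi_c\}\bigr|\leq\sum_c r_c\leq e(H),$$
which is the lemma. The main obstacle is exactly this counting lemma: the crux is introducing the auxiliary spanning tree of the complete graph and double-counting "tree edges covered by a color," so that the per-edge lower bound $k$ and the per-color upper bound $r_c$ meet at $\sum_c r_c\leq e(H)$. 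The contraction-invariance step and the sufficiency direction are then routine.
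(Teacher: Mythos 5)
Your proof is correct, and although its skeleton coincides with the paper's (sufficiency by coloring $k$ edge-disjoint spanning trees; necessity by contracting vertex-partitions and reducing everything to the bound $e(H)\geq k(s-1)$ for any loopless graph $H$ with an $RMC_k$-coloring, then invoking Theorem \ref{N-W-T}), your proof of that central bound is genuinely different from the paper's proof of its Claim \ref{IFF}. The paper works with an \emph{extremal} $RMC_k$-coloring, so that every color class is a tree, and runs an uncrossing algorithm: whenever two color trees $G_i,G_j$ have $V(G_i)-V(G_j)$, $V(G_j)-V(G_i)$ and $V(G_i)\cap V(G_j)$ all nonempty, it replaces them by new trees on $V(G_i)\cup V(G_j)$ and $V(G_i)\cap V(G_j)$, verifies by case analysis that this surgery preserves both the $RMC_k$ property and the edge count, proves termination via a lexicographic potential on the sequence of tree orders, and argues that in the terminal configuration the $k$ largest color trees must be spanning trees, whence $e\geq k(n-1)$. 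You bypass extremality, the surgery, and the termination argument entirely: the component partitions $\Pi_c$, the elementary bound $e(H_c)\geq s-|\Pi_c|=r_c$, and a double count over an auxiliary spanning tree $T$ of the complete graph on $V(H)$ give $k(s-1)\leq\sum_{e\in T}w(e)\leq\sum_c r_c\leq e(H)$ in a few lines, using only the observation that distinct colors force edge-disjointness so the $RMC_k$ condition is exactly ``every pair lies in a common block of at least $k$ of the $\Pi_c$.'' What your route buys is brevity and robustness: no appeal to the proposition that extremal color classes are trees, no verification that a recoloring operation preserves $RMC_k$, no potential-function argument. What the paper's route buys is a structural by-product (an edge-count-preserving transformation of any extremal coloring into one whose $k$ largest color trees are spanning), but that extra structure is never exploited later — only the inequality of Claim \ref{IFF} is reused, in the random-graph section — so your counting lemma would serve the rest of the paper equally well.
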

\begin{proof}
If $G$ has $k$ edge-disjoint spanning trees $T_1,\cdots,T_k$,
then we can color the edges of each $T_i$ by $i$ and color the other edges of $G$ by
colors in $[k]$ arbitrarily. Then the coloring is an $RMC_k$-coloring of $G$.
Therefore, $G$ has an $RMC_k$-coloring when $\tau(G)\geq k$.

We will prove that if there exists an $RMC_k$-coloring of $G$,
then $G$ has $k$ edge-disjoint spanning trees,
i.e., $\tau(G)\geq k$.
Before proceeding to the proof, we need a critical claim as follows.
\begin{claim} \label{IFF}
If $G$ has an $RMC_k$-coloring, then $e(G)\geq k(n-1)$.
\end{claim}
\begin{proof}
Suppose that $\Gamma$ is an extremal $RMC_k$-coloring of $G$ and $G_1,\cdots,G_t$ are all the color-induced trees of $G$ (say $G_i$ is the $i$-induced tree).
If there are two color-induced trees $G_i$ and $G_j$ satisfying that all the three sets $V(G_i)-V(G_j)$, $V(G_j)-V(G_i)$ and $V(G_i)\cap V(G_j)$ are nonempty,
then we use $P(G,\Gamma,i,j)$ to denote the graph $(G-E(G_i\cup G_j))\cup T_1\cup T_2$,
where $T_1$ and $T_2$ are two new trees with $V(T_1)=V(G_i)\cup V(G_j)$ and $V(T_2)=V(G_i)\cap V(G_j)$
(note that $T_1,T_2$ and $G-E(G_i\cup G_j)$ are mutually edge disjoint,
then $P(G,\Gamma,i,j)$ may have parallel edges);
we also use $\Upsilon(G,\Gamma,i,j)$ to denote the edge-coloring of $P(G,\Gamma,i,j)$,
which is obtained from $\Gamma$ by coloring $E(T_1)$ with $i$ and coloring $E(T_2)$ with $j$, respectively.
Then $|G|=|P(G,\Gamma,i,j)|$ and $e(G)=e(P(G,\Gamma,i,j))$.

We claim that $\Upsilon(G,\Gamma,i,j)$ is an $RMC_k$-coloring of $P(G,\Gamma,i,j)$, and we prove it below.
For any two vertices $u,v$ of $G$, if at least one of them is in $V(G)-V(G_i\cup G_j)$,
or one is in $V(G_i)-V(G_j)$ and the other is in $v\in V(G_j)-V(G_i)$,
then none of rainbow monochromatic $uv$-paths of $G$ are colored by $i$ or $j$,
these rainbow monochromatic $uv$-paths of $G$ are kept unchanged.
Thus there are at least $k$ rainbow monochromatic $uv$-paths in $P(G,\Gamma,i,j)$ under $\Upsilon(G,\Gamma,i,j)$;
if both of $u,v$ are in $V(G_i)\cap V(G_j)$,
then there are at least $k-2$ rainbow monochromatic $uv$-paths of $G$ with colors different from $i$ and $j$,
and these rainbow monochromatic $uv$-paths are kept unchanged.
Since $T_1$ and $T_2$ provide two rainbow monochromatic $uv$-paths, one is colored by $i$ and the other is colored by $j$,
there are at least $k$ rainbow monochromatic $uv$-paths in $P(G,\Gamma,i,j)$ under $\Upsilon(G,\Gamma,i,j)$;
if, by symmetry, $u$ and $v$ are in $G_i$ and at most one of them is in $V(G_i)\cap V(G_j)$,
then there are at least $k-1$ rainbow monochromatic $uv$-paths with colors different from $i$ and $j$,
and these rainbow monochromatic $uv$-paths are kept unchanged.
Since $T_1$ provides a monochromatic $uv$-path with color $i$,
there are at least $k$ rainbow monochromatic $uv$-paths in $P(G,\Gamma,i,j)$ under $\Upsilon(G,\Gamma,i,j)$.

We now introduce a simple algorithm on $G$.
Setting $H:=G$ and $\Gamma^*:=\Gamma$.
If there are two color-induced subgraphs $H_i$ and $H_j$ of $H$ satisfying that all the three sets $V(H_i)-V(H_j)$, $V(H_j)-V(H_i)$
and $V(H_i)\cap V(H_j)$ are nonempty, then replace $H$ by $P(H,\Gamma^*,i,j)$ and replace $\Gamma^*$ by $\Upsilon(H,\Gamma^*,i,j)$.

We now show that the algorithm will terminate in a finite steps.
In the $i$th step, let $H=H_i$ and $\Gamma^*=\Gamma_i$, and
let $G^i_1,\cdots,G^i_{t_i}$ be all the color-induced subgraphs of $H_i$ such that
$|G^i_1|\geq |G^i_2|\geq \cdots \geq |G^i_{t_i}|$ (in fact, in each step, each color-induced subgraph is a tree),
and let $l_i=(|G^i_1|,|G^i_2|,\cdots,|G^i_{t_i}|)$ be an integer sequence.
Suppose $H_{i+1}=P(H_i,\Gamma_i,s,t)$, i.e., $H_{i+1}=H_i-E(G^i_s\cup G^i_t)\cup T_1\cup T_2$,
where $V(T_1)=V(G^i_s)\cup V(G^i_t)$ and $V(T_2)=V(G^i_s)\cap V(G^i_t)$.
Then $|T_1|>\max\{|G^i_s|,|G^i_t|\}$. Therefore, $l_i\prec l_{i+1}$.
Since $G$ is a finite graph and $e(H_i)=e(G)$ in each step, the algorithm will terminate in a finite step.

Let $H'$ be the resulting graph and $\Gamma'$ be the resulting $RMC_k$-coloring of $H'$,
and $T'_1,\cdots,T'_r$ be the color-induced trees of $H'$ with $|T'_1|\geq\cdots\geq |T'_r|$.
Then $T'_k$ is a spanning tree of $H'$; otherwise, there is al least one vertex $w$ in $V(G)-V(T_k)$.
Suppose $u\in V(T_k)$. Since $T'_1,\cdots,T'_{k-1}$ provide at most $k-1$ rainbow monochromatic $uw$-paths,
there is a tree of $\{T'_{k+1},\cdots,T'_r\}$, say $T'_a$, containing $u$ and $w$.
Then $V(T'_k)-V(T'_a)\neq \emptyset$; otherwise $|T'_k|<|T'_a|$, a contradiction.
Thus $V(T'_k)-V(T'_a)$, $V(T'_a)\cap V(T'_k)$ and  $V(T'_a)-V(T'_k)$ are nonempty sets,
which contradicts that $H'$ is the resulting graph of the algorithm.
Therefore, there are at least $k$ spanning trees of $H'$, i.e., $e(G)=e(H')\geq k(n-1)$.
\end{proof}

Now, we are ready to prove $\tau(G)\geq k$ by contradiction.
Suppose that $\Gamma$ is an $RMC_k$-coloring of $G$ but $\tau(G)< k$.
By Theorem \ref{N-W-T}, there exists a partition $\mathcal{P}=\{V_1,\cdots,V_t\}$ of $V(G)$ ($|\mathcal{P}|=t\geq2$), such that $e(G/\mathcal{P})<k(|\mathcal{P}|-1)$.
Let $G^*=G/\mathcal{P}$ be the graph obtained from $G$ by shrinking each $V_i$ into a single vertex $v_i$, $1\leq i\leq t$.

Suppose that $\Gamma^*$ is an edge-coloring of $G^*$ obtained from $\Gamma$ by keeping the color of every edge of $G$ not being deleted
(we only delete edges contained in each $V_i$). It is obvious that $\Gamma^*$ is an $RMC_k$-coloring of $G^*$.
However, $e(G^*)<k(|G^*|-1)$, a contradiction to Claim \ref{IFF}. So, $\tau(G)\geq k$.
\end{proof}

We will turn to discuss simple graphs below. Because a simple graph is also a loopless graph,
Theorem \ref{SN} holds for simple graphs. For a connected simple graph $G$, since $1\leq\tau(G)\leq\tau(K_n)=\lfloor\frac{e(K_n)}{n-1}\rfloor=\lfloor\frac{n}{2}\rfloor,$
we have the following result.
\begin{corollary} \label{bound-k}
If $G$ is a simple graph of order $n$ and $G$ has an $RMC_k$-coloring, then $1\leq k\leq \lfloor\frac{n}{2}\rfloor$.
\end{corollary}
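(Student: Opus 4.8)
The plan is to derive the corollary directly from the characterization in Theorem \ref{SN}, which reduces the existence of an $RMC_k$-coloring to the single spanning-tree-packing parameter $\tau(G)$. Since $G$ is assumed to have an $RMC_k$-coloring, Theorem \ref{SN} immediately gives $\tau(G)\geq k$. The lower bound $k\geq 1$ is built into the definition, because an $RMC_k$-coloring is only defined for a positive integer $k$; hence the entire content of the statement is the upper bound $k\leq\lfloor\frac{n}{2}\rfloor$. To obtain it, I would bound $\tau(G)$ from above by $\tau(K_n)$ and then estimate $\tau(K_n)$ by exhibiting a single good partition.

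First I would establish the monotonicity $\tau(G)\leq\tau(K_n)$. Since $G$ is a simple graph on $n$ vertices, its edge set is a subset of $E(K_n)$ on the same labeled vertex set, so $G$ is a spanning subgraph of $K_n$. For any vertex-partition $\cP$ of $V(G)=V(K_n)$ with $|\cP|\geq 2$, shrinking the parts leaves the number of super-vertices unchanged, $|G/\cP|=|K_n/\cP|=|\cP|$, while $e(G/\cP)$ counts exactly the edges of $G$ joining distinct parts and $e(K_n/\cP)$ counts those of $K_n$; as $E(G)\subseteq E(K_n)$, this gives $e(G/\cP)\leq e(K_n/\cP)$. Therefore each ratio $\frac{e(G/\cP)}{|G/\cP|-1}$ for $G$ is at most the corresponding ratio for $K_n$, and taking the minimum over all partitions with $|\cP|\geq 2$ yields $\tau(G)\leq\tau(K_n)$.

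Next I would bound $\tau(K_n)$ from above using the partition $\cP$ of $V(K_n)$ into singletons, for which $K_n/\cP=K_n$ and $|\cP|=n$, so that $\tau(K_n)\leq\frac{e(K_n)}{n-1}=\frac{{n\choose 2}}{n-1}=\frac{n}{2}$. Chaining the inequalities gives $k\leq\tau(G)\leq\tau(K_n)\leq\frac{n}{2}$, and since $k$ is an integer this forces $k\leq\lfloor\frac{n}{2}\rfloor$, which together with $k\geq 1$ completes the argument.

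I do not expect a genuine obstacle: the proof is essentially a one-line reduction once Theorem \ref{SN} is available. The only point that deserves explicit care is the monotonicity step, where one must observe that deleting the within-part edges and merging the parts behave consistently for $G$ and for $K_n$, so that passing from $K_n$ to its spanning subgraph $G$ weakly decreases the numerator of each partition ratio while fixing the denominator; this pointwise comparison is what legitimizes taking the minimum and concluding $\tau(G)\leq\tau(K_n)$. Note also that for the upper bound I need only the inequality $\tau(K_n)\leq\frac{n}{2}$ coming from one specific partition, so I can sidestep any finer analysis of which partition actually minimizes the ratio for $K_n$.
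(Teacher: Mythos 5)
Your proposal is correct and follows essentially the same route as the paper: invoke Theorem \ref{SN} to get $\tau(G)\geq k$, bound $\tau(G)\leq\tau(K_n)\leq\frac{e(K_n)}{n-1}=\frac{n}{2}$, and conclude $k\leq\lfloor\frac{n}{2}\rfloor$ by integrality of $k$. Your write-up is in fact slightly more careful than the paper's one-line chain, since you justify the monotonicity $\tau(G)\leq\tau(K_n)$ partition-by-partition and avoid the paper's harmless abuse of writing $\tau(K_n)$ itself as a floor.
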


By Theorem \ref{SN}, if $\tau(G)\geq k$, a trivial $RMC_k$-coloring of a graph $G$ is a coloring that colors
the edges of the $k$ edge-disjoint spanning trees of $G$ by colors in $[k]$, respectively,
and then colors the other edges trivial.
Since the edge-coloring wastes $k(n-2)$ colors, $rmc_k(G)\geq m-k(n-2)$.
Thus, $m-k(n-2)$ is a lower bound of $rmc_k(G)$ if $G$ has an $RMC_k$-coloring.
\begin{corollary}
If $G$ is a graph with $\tau(G)\geq k$, then $rmc_k(G)\geq m-k(n-2)$.
\end{corollary}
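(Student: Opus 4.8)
The plan is to exhibit one explicit $RMC_k$-coloring of $G$ that uses exactly $m-k(n-2)$ colors; since $rmc_k(G)$ is by definition the maximum number of colors over all $RMC_k$-colorings, this at once yields the claimed lower bound.

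First I would extract the spanning trees. Since $\tau(G)\geq k$ and $k$ is an integer, we have $\lfloor\tau(G)\rfloor\geq k$, so by Theorem \ref{N-T} the graph $G$ contains $T(G)=\lfloor\tau(G)\rfloor\geq k$ edge-disjoint spanning trees; fix $k$ of them, say $T_1,\cdots,T_k$. Each $T_i$ is spanning, hence has exactly $n-1$ edges, and the trees are pairwise edge-disjoint, so together they occupy $k(n-1)$ distinct edges of $G$, leaving $m-k(n-1)$ other edges.

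Next I would define the coloring $\Gamma$: color every edge of $T_i$ with color $i$ for $i\in[k]$, and give each of the $m-k(n-1)$ remaining edges its own fresh (trivial) color. To verify that $\Gamma$ is an $RMC_k$-coloring, take any two distinct vertices $u,v$. For each $i\in[k]$ the tree $T_i$ contains a unique $uv$-path, which is monochromatic of color $i$; these $k$ paths carry the pairwise distinct colors $1,\cdots,k$ and are pairwise edge-disjoint because $T_1,\cdots,T_k$ are. Hence $u$ and $v$ are joined by $k$ rainbow monochromatic paths, exactly as in the forward construction already used in the proof of Theorem \ref{SN}.

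Finally I would count the colors of $\Gamma$: the $k$ spanning trees contribute $k$ colors, and each of the $m-k(n-1)$ leftover edges contributes one further color, for a total of $k+(m-k(n-1))=m-k(n-2)$ colors. Thus $G$ admits an $RMC_k$-coloring with $m-k(n-2)$ colors, whence $rmc_k(G)\geq m-k(n-2)$. There is essentially no obstacle here; the only point deserving a word of care is checking that assigning the non-tree edges fresh colors (rather than reusing colors in $[k]$ as in Theorem \ref{SN}) still gives a valid $RMC_k$-coloring, which it does, since the $k$ monochromatic tree-paths already certify rainbow monochromatic $k$-edge-connectivity no matter how the remaining edges are colored.
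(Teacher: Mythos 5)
Your proof is correct and follows essentially the same route as the paper: the paper likewise extracts $k$ edge-disjoint spanning trees (guaranteed by $\tau(G)\geq k$ via the Nash--Williams--Tutte theorem), colors each tree with one of $k$ distinct colors, colors the remaining edges trivially, and counts $m-k(n-2)$ colors in total. Your extra verification that the tree-paths are edge-disjoint, monochromatic, and pairwise differently colored is exactly the check the paper leaves implicit.
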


\section{Some graphs with rainbow monochromatic $k$-edge-connection number $m-k(n-2)$}

In this section, we mainly study the graphs with rainbow monochromatic $k$-edge-connection number $m-k(n-2)$ (graphs in the following theorem).
\begin{theorem}\label{RMC-main}
Let $G$ be a graph with $\tau(G)\geq k$.
If $G$ satisfies any of the following properties, then $rmc_k(G)=m-k(n-2)$.
\begin{enumerate}
 \item $G$ is triangle-free;
 \item $diam(G)\geq3$;
 \item $G$ has a cut vertex;
 \item $G$ is not $k+1$-edge-connected.
 \end{enumerate}
\end{theorem}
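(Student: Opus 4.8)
The plan is to keep the already-established lower bound $rmc_k(G)\ge m-k(n-2)$ and to prove the matching upper bound $rmc_k(G)\le m-k(n-2)$ under each of the four hypotheses. Writing an extremal $RMC_k$-coloring through its nontrivial color-induced trees $T_1,\dots,T_r$, recall that $rmc_k(G)=m-\omega$ with $\omega=\sum_{i\in[r]}(|T_i|-2)$, so the entire task reduces to showing that the waste satisfies $\omega\ge k(n-2)$. Two facts come for free. First, since the nontrivial color degree obeys $d^n(v)\ge k$ at every vertex and $d^n(v)$ counts exactly the nontrivial trees through $v$, summing incidences gives $\sum_{i\in[r]}|T_i|=\sum_{v\in V(G)}d^n(v)\ge kn$. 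Second, the operation $P(\cdot)$ and the laminar analysis inside the proof of Theorem \ref{SN} show that once the family $\{V(T_i)\}$ is laminar, the $k$ largest trees are spanning, whence $\omega\ge\sum_{i\le k}(|T_i|-2)=k(n-2)$.

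The difficulty is that the laminarizing operation $P$ preserves the waste only when the two merged trees meet in at least two vertices; when two color classes share exactly one vertex, merging them deletes a color and strictly increases $\omega$. This single-common-vertex degeneracy is precisely what lets small, non-spanning trees do ``double duty'' and makes $\omega<k(n-2)$ possible for general graphs: already $K_4$ minus an edge has $rmc_1=4>m-(n-2)=3$, realized by a single size-$3$ tree whose every overlap with a trivial edge is a single vertex. Hence the real content of the theorem is that each of the four hypotheses forbids this degeneracy. Accordingly, the lemma I would isolate is: \emph{under any one of the four conditions, every extremal $RMC_k$-coloring has at least $k$ color classes that are spanning trees} (equivalently, the $k$ largest $T_i$ span $G$); the inequality $\omega\ge k(n-2)$ then follows immediately.

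To prove this target lemma I would argue by contradiction in each case, assuming a vertex $w$ lies outside the union of the $k$ largest trees, and producing either a recoloring that frees a color (contradicting extremality) or a pair of vertices admitting fewer than $k$ rainbow monochromatic paths. For condition (4), $\tau(G)\ge k$ together with failure of $(k+1)$-edge-connectivity yields an edge-cut of exactly $k$ edges; for any pair straddling the cut the $k$ edge-disjoint monochromatic paths must use these $k$ cut edges bijectively, forcing them to carry $k$ distinct nontrivial colors whose classes each connect the two sides for every cross pair, and one pushes this to $k$ spanning classes. For condition (3) the same routing phenomenon holds at the cut vertex, through which all cross paths must pass. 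For condition (2) a pair $u,v$ with $d_G(u,v)\ge 3$ forces every monochromatic $uv$-path to have length $\ge 3$, which excludes the short double-duty trees and lets one show the large classes span.

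The triangle-free case (1) is the most delicate, and this is where I expect the main obstacle to lie. Here there is no global cut structure to exploit; instead one must use the absence of common neighbours locally. The intended mechanism is that whenever the largest class $T_1$ is non-spanning, a boundary vertex $w\notin V(T_1)$ adjacent to $V(T_1)$ has, by triangle-freeness, no two neighbours joined inside a short monochromatic configuration, so one can split $T_1$ or re-attach a pendant edge to liberate a fresh color while preserving, for every pair, the $k$ edge-disjoint rainbow monochromatic paths. Making this recoloring argument uniform in $k$ — and in particular verifying that the $k$-fold rainbow connectivity is never broken when colors are rearranged — is the crux of the whole proof, and I would expect to lean on the extremal optimization of $g$ over $\mathcal{S}_t$ from Lemmas \ref{expression} and \ref{expression-1} to package the resulting bookkeeping on tree sizes.
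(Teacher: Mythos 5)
Your proposal is not a proof but a reduction of the theorem to a structural lemma that you never establish, and that lemma is actually \emph{stronger} than the theorem itself. You propose to show that under each of the four hypotheses, \emph{every} extremal $RMC_k$-coloring has at least $k$ spanning color classes. Since the trivial coloring (color $k$ edge-disjoint spanning trees with $k$ colors, all else trivial) attains waste exactly $k(n-2)$, your lemma is equivalent to saying that any coloring with fewer than $k$ spanning classes wastes \emph{strictly more} than $k(n-2)$ colors; the theorem only needs waste $\geq k(n-2)$ for all colorings. Nothing in your write-up proves this for cases (1)--(3): the arguments are sketches (``one pushes this to $k$ spanning classes,'' ``the same routing phenomenon holds,'' ``lets one show the large classes span''), and for the triangle-free case you explicitly concede that the recoloring mechanism --- splitting $T_1$ or re-attaching a pendant edge so as to free a color while preserving $k$ rainbow monochromatic paths for \emph{every} pair --- is the unresolved crux. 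That mechanism is exactly where the difficulty lives: as you yourself observe, the merging operation of Theorem \ref{SN} fails to preserve waste precisely when two classes meet in one vertex, and no device is offered to rule out or repair such configurations.

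The paper's actual proofs avoid this structure entirely, except in case (4). For triangle-free graphs (Proposition \ref{RMC-mian-1}) it counts rainbow monochromatic paths of order at least three --- each non-adjacent pair needs $k$ of them, each adjacent pair needs $k-1$ --- which forces $\sum_{i}{|T_i|-1\choose 2}\geq k{n\choose 2}-e(G)$, and then the optimization Lemmas \ref{expression} and \ref{expression-1} convert this into the waste bound $\sum_i(|T_i|-2)\geq k(n-2)$ (with an induction on $k$ when some class happens to be spanning). Crucially, its Case 1 analyzes colorings in which \emph{no} class is spanning and a vertex has nontrivial color degree exactly $k$, and shows such colorings still waste at least $k(n-2)$; it never needs to show they are non-extremal, which is what your lemma would demand. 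For $diam(G)\geq 3$ it proves the stronger Proposition \ref{RMC-nG}, $rmc_k(G)\leq m-k(n-2)+n_G$, by a waste count over three families of trees, and for a cut vertex Proposition \ref{RMC-vertex-cut} is a similar direct count at the cut vertex --- again with no spanning-class claim. Only in case (4) does the paper argue as you sketch, showing each of the $k$ trees crossing a $k$-edge-cut must be spanning; that case of your plan is sound and matches the paper. So the gap is concrete: for cases (1)--(3) you have replaced the needed counting with an unproven (and harder) structural assertion, and the triangle-free case --- the heart of the theorem --- is left exactly where it started.
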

We will prove this theorem separately by four propositions below (the second result is a corollary of Proposition \ref{RMC-nG}).

\begin{proposition}\label{RMC-mian-1}
If $G$ is a triangle-free graph with $\tau(G)\geq k$, then $rmc_k(G)=m-k(n-2)$.
\end{proposition}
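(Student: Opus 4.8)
The plan is to prove the equality $rmc_k(G)=m-k(n-2)$ by establishing the matching upper bound, since the lower bound $rmc_k(G)\geq m-k(n-2)$ already follows from the corollary to Theorem~\ref{SN}. Recall that for an extremal $RMC_k$-coloring $\Gamma$, every color-induced subgraph is a tree (Proposition~1), and $\Gamma$ wastes $\omega=\sum_{i\in[r]}(|T_i|-2)$ colors, where $T_1,\dots,T_r$ are the nontrivial trees, with $rmc_k(G)=m-\omega$. Thus the entire task reduces to showing $\omega\geq k(n-2)$ for \emph{every} extremal $RMC_k$-coloring. Equivalently, writing $\omega=\sum_{i\in[r]}(|T_i|-1)-r=\bigl(\sum_{i\in[r]}|E(T_i)|\bigr)-r$, I want to show the nontrivial trees contain many edges relative to their count, and the triangle-free hypothesis is what forces this.

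First I would set up the counting. Since $\Gamma$ is an $RMC_k$-coloring with $k\geq 2$ (the case $k=1$ being the known monochromatic connection result, already covered by Theorem~\ref{RMD-CY}(2)), Lemma~6 gives $d^n(v)\geq k$ for every vertex $v$: each vertex meets at least $k$ distinct nontrivial colors. The key observation exploiting triangle-freeness is that a nontrivial color-induced tree $T$ on a triangle-free graph can contain \emph{no triangle}, so each such tree is a tree in the usual sense with $|E(T)|=|T|-1$; more importantly, I expect the crux to be a local argument at each vertex showing that the $k$ nontrivial colors through $v$ cannot all correspond to ``cheap'' (small) trees. Concretely, I would count the pairs $(v,i)$ where color $i$ is a nontrivial color incident to $v$: this count is exactly $\sum_{v}d^n(v)\geq kn$ on one side, and equals $\sum_{i\in[r]}|T_i|$ on the other side, since a tree $T_i$ is incident to exactly $|T_i|$ vertices (a tree on $|T_i|$ vertices touches precisely those $|T_i|$ vertices). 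Therefore $\sum_{i\in[r]}|T_i|\geq kn$.

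From here the arithmetic should close the gap. I have $\omega=\sum_{i\in[r]}(|T_i|-2)=\sum_{i\in[r]}|T_i|-2r\geq kn-2r$, so it suffices to bound $r$ from above by $\tfrac{kn-k(n-2)}{2}=k$. That is, I would argue that an extremal $RMC_k$-coloring of a triangle-free graph has at most $k$ nontrivial trees, i.e.\ $r\leq k$. This is where triangle-freeness must do its real work: in a triangle, three vertices can be connected pairwise by three distinct single-edge monochromatic paths, letting small (even trivial) trees contribute to the required rainbow paths, but in a triangle-free graph every monochromatic path of length one between $u,v$ forces the edge $uv$ to exist and there is no ``shortcut,'' so the $k$ rainbow paths between any adjacent pair must be supplied by genuinely large structures. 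I would make this precise by showing that the $r$ nontrivial trees must collectively behave like $k$ edge-disjoint spanning trees, forcing $r\le k$ together with each $|T_i|=n$.

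The step I expect to be the main obstacle is precisely establishing $r\leq k$ (equivalently, that the nontrivial trees are essentially $k$ spanning trees). The danger is that an extremal coloring might use many small nontrivial trees rather than few large ones, which would inflate $r$ and break the bound. To rule this out I would likely apply the tree-merging operation $P(G,\Gamma,i,j)$ and the coloring $\Upsilon(G,\Gamma,i,j)$ introduced in the proof of Theorem~\ref{SN}, which preserves both $|G|$, $e(G)$, and the $RMC_k$ property while strictly increasing the lexicographic tree-size sequence; running that algorithm to termination yields a configuration in which no two nontrivial trees ``cross,'' and in the triangle-free setting I expect this laminar/nested structure plus the degree bound $d^n(v)\geq k$ to force exactly $k$ spanning trees and hence $r=k$, $\omega=k(n-2)$. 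Verifying that the merging operation does not decrease the number of colors (so that extremality is maintained) and that triangle-freeness survives the argument are the delicate points I would handle carefully.
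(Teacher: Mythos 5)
Your setup is fine as far as it goes: the lower bound from Theorem~\ref{SN}, the reduction to showing that every extremal $RMC_k$-coloring wastes at least $k(n-2)$ colors, and the count $\sum_{i\in[r]}|T_i|=\sum_{v}d^n(v)\geq kn$ (valid for $k\geq 2$) are all correct. But the proof has a genuine gap at exactly the step you flag: the claim $r\leq k$ for every extremal coloring. You give no proof of it, and the tool you propose --- iterating the merging operation $P(G,\Gamma,i,j)$ from the proof of Theorem~\ref{SN} --- cannot deliver it. That operation replaces two crossing color trees by new trees on $V(G_i)\cup V(G_j)$ and $V(G_i)\cap V(G_j)$ whose edges need not be edges of $G$ (parallel edges may even appear), so after a single step you are no longer analyzing a coloring of $G$ at all, and triangle-freeness of the host graph is lost. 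Worse, the operation does not preserve the number of colors: when $|V(G_i)\cap V(G_j)|=1$ the color $j$ disappears and the waste strictly increases by one. So for the terminal multigraph $H'$ with terminal coloring $\Gamma'$ you only get $\omega(\Gamma')\geq\omega(\Gamma)$ together with $\omega(\Gamma')\geq k(n-2)$, and these two inequalities point the wrong way --- they say nothing about $\omega(\Gamma)$. This is precisely why Claim~\ref{IFF} extracts only the edge count $e(G)=e(H')\geq k(n-1)$ from that algorithm and nothing about wasted colors. A sanity check that your route cannot work as stated: the merging argument nowhere uses triangle-freeness, so if it bounded the waste of the original coloring it would prove $rmc_k(G)\leq m-k(n-2)$ for \emph{every} graph admitting an $RMC_k$-coloring, contradicting Remark~1 (e.g.\ $rmc_2(K_n)$ and the $(k,s)$-perfectly-connected graphs exceed this bound).

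The paper's proof never bounds $r$; it allows $r$ to be arbitrarily large. After an induction on $k$ that disposes of the case where some color tree is spanning, it splits on the minimum nontrivial color degree. If every vertex has $d^n(v)\geq k+1$, your counting works with slack: $\omega\geq(k+1)n-2r$ settles $r\leq\frac{n}{2}+k$, and for larger $r$ the paper counts rainbow monochromatic paths of order at least three --- each adjacent pair needs at least $k-1$ of them and each non-adjacent pair at least $k$, a nontrivial tree of order $t$ supplies only $\binom{t-1}{2}$, and Turán's bound $e(G)\leq\frac{n^2}{4}$ (one of the two places triangle-freeness enters) turns this into a convex constraint that is then minimized via Lemmas~\ref{expression} and \ref{cho-m}. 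The genuinely hard case is a vertex $v$ with $d^n(v)=k$ exactly: there triangle-freeness is used structurally --- every vertex outside $S=\bigcap_{i\in[k]}V(T_i)$ is joined to $v$ by a trivial edge, hence $V(G)-S$ is an independent set --- and the waste of the trees through $v$ plus the waste of the extra trees covering pairs in $V(G)-S$ is again bounded by the optimization lemma. Note that this hard case features at least $k+2$ nontrivial trees, so the paper cannot (and does not) prove $r\leq k$; your reduction replaces the actual difficulty with a stronger claim that may simply be false. To repair your outline you would need either a proof of $r\leq k$ that stays inside $G$ and preserves the number of colors, or a path-counting argument of the above type in place of that reduction.
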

\begin{proof}
By Theorem \ref{RMD-CY}, the result holds for $k=1$.
Therefore, let $k\geq2$ (this requires $n\geq 4$).
Since $G$ is a triangle-free graph, by Tur$\acute{a}$n's Theorem, $e(G)\leq \frac{n^2}{4}$.
Then $k\leq\tau(G)\leq \frac{e(G)}{|G|-1}\leq \frac{n+1}{4}+\frac{1}{4(n-1)}$.
So, $n\geq 4k-1-\frac{1}{n-1}$, i.e., $n\geq 4k-1$.

Suppose $\Gamma$ is an extremal $RMC_k$-coloring of $G$.
If there is a color-induced tree, say $T$, that forms a spanning tree of $G$,
then $\Gamma$ is an extremal $RMC_{k-1}$-coloring restricted on $G-E(T)$
(It is obvious that $\Gamma$ is an $RMC_{k-1}$-coloring restricted on $G-E(T)$.
If $\Gamma$ is not an extremal $RMC_{k-1}$-coloring restricted on $G-E(T)$,
then there is an $RMC_{k-1}$-coloring $\Gamma'$ of $G-E(T)$ such that $|\Gamma(G-E(T))|<|\Gamma'(G-E(T))|$.
Let $\Gamma''$ be an edge-coloring of $G$ obtained from $\Gamma'$ by assigning $E(T)$ with a new color.
Then $\Gamma''$ is an $RMC_k$-coloring of $G$.
However, $|\Gamma(G)|<|\Gamma''(G)|$, a contradiction).
Since $G-E(T)$ is triangle-free, by induction on $k$,
$$rmc_{k-1}(G-E(T))= e(G-E(T))-(k-1)(n-2)=m-k(n-2)-1.$$
Therefore, $rmc_k(G)=1+|\Gamma(G-E(T))|= 1+rmc_{k-1}(G-E(T))= m-k(n-2)$.

Now, suppose that each color-induced tree is not a spanning tree.
We use $\mathcal{S}$ to denote the set of nontrivial color-induced trees of $G$.
We will prove that $\Gamma$ wastes at least $k(n-2)$ colors below.

{\bf Case 1.} There is a vertex $v$ of $G$ such that $d^n(v)= k$.

Suppose that $\mathcal{T}=\{T_1,\cdots,T_k\}$ is the set of the $k$ nontrivial color-induced trees containing $v$.
Since each vertex connects $v$ by at least $k-1\geq1$ nontrivial rainbow monochromatic paths,
$V(G)=\bigcup_{i\in[k]}V(T_i)$. Let $S=\bigcap_{i\in[k]}V(T_i)$ and $S_i=V(T_i)-S$.

For any $i,j\in[k]$, both $S_i-S_j$ and $S_j-S_i$ are nonempty.
Otherwise, suppose $S_i\subseteq S_j$.
Since $T_j$ is not a spanning tree, there is a vertex $u'\in V(G)-V(T_j)$.
Then there are at most $k-2$ nontrivial rainbow monochromatic $u'v$-paths, a contradiction.

According to the above discussion, $S,S_1,\cdots,S_k$ are all nonempty sets.
Moreover, since $k\geq2$, $|V(G)-S|\geq2$.

For each $i\in[k]$ and a vertex $u$ in $S_i$, there is an $i_u\in[k]$ such that $u\notin V(T_{i_u})$.
Furthermore, $u\in V(T_j)$ for each $j\in [k]-\{i_u\}$; for otherwise, there are at most $k-2$ nontrivial rainbow monochromatic
$uv$-paths, which contradicts that $\Gamma$ is an $RMC_k$-coloring of $G$.
Therefore, there are exactly $k-1$ nontrivial rainbow monochromatic $uv$-paths. This implies that $uv$ is a trivial edge of $G$.
Thus, $v$ connects each vertex of $V(G)-S$ by a trivial edge.
Since $G$ is triangle-free, $V(G)-S$ is an independent set.
It is easy to verify that $\mathcal{T}$ wastes
$$\sum_{i\in[k]}(|T_i|-2)=\sum_{i\in[k]}|T_i|-2k=k|S|+(k-1)(n-|S|)-2k=k(n-2)+|S|-n$$ colors.

Let $\mathcal{F}=\mathcal{S}-\mathcal{T}$ (recall that $\mathcal{S}$ is the set of nontrivial trees of $G$).
Since each two vertices of $V(G)-S$ are in at most $k-1$ trees of $\mathcal{T}$ and $V(G)-S$ is an independent set,
there is at least one tree of $\mathcal{F}$ containing them.
Moreover, such a tree contains at least one vertex of $S$.
Suppose that $F_1,\cdots,F_t$ are trees of $\mathcal{F}$ with $|V(F_i)\cap (V(G)-S)|=x_i\geq2$ and $x_1\geq x_2\geq\cdots\geq x_t$.
Let $w_i\in V(F_i)\cap S$ and $W_i=V(F_i)\cap (V(G)-S)\cup \{w_i\}$.
Then $3\leq |W_i|\leq n-|S|+1$ for each $i\in[t]$, and
\begin{align}
\sum_{i\in[t]}{|W_i|-1\choose 2}\geq {n-|S|\choose 2}. \label{trian-dn}
\end{align}
$\mathcal{F}$ wastes at least $\sum_{i\in[t]}(|F_i|-2)\geq \sum_{i\in[t]}(|W_i|-2)$ colors.

For any $i,j\in[k]$, since both $S_i-S_j$ and $S_j-S_i$ are nonempty,
there are at most $k-2$ rainbow monochromatic paths connecting every vertex of $S_i-S_j$
and every vertex of $S_j-S_i$ in $\mathcal{T}$.
Thus there are at least two trees of $\mathcal{F}$ containing the two vertices, i.e., $t\geq2$.

If $k=2$ and $|S|-1=3$, then $\mathcal{F}$ wastes at least two colors, and thus $\Gamma$ wastes at least $k(n-2)$ colors.
Otherwise, $|S|-1\geq 4$. Then by Lemma \ref{expression}, the expression $\sum_{i\in[t]}(|W_i|-2)$,
subjects to $(\ref{trian-dn})$, $n-|S|+1\geq |W_i|\geq 3$ and $t\geq2$,
is minimum when $|W_1|=n-|S|+1$, and $|W_i|=3$ for $i=2,3\cdots,t$.
Then $\mathcal{F}$ wastes at least $n-|S|$ colors,
and thus $\Gamma$ wastes at least $k(n-2)$ colors.

{\bf Case 2.} each vertex $v$ of $G$ has $d^n(v)\geq k+1$.

Suppose $\mathcal{S}=\{T_1,\cdots,T_r\}$ and $|T_i|\geq |T_{i+1}|$ for $i\in[r-1]$.
Since $d^n(v)\geq k+1$ for each vertex $v$ of $G$,
$\sum_{i\in[r]}|T_i|\geq (k+1)n$.

If $r\leq \frac{n}{2}+k$, then $\sum_{i\in[r]}(|T_i|-2)\geq k(n-2)$.
This implies that $\Gamma$ wastes at least $k(n-2)$ colors.
Thus, we consider $r> \frac{n}{2}+k$.

Since each pair of non-adjacent vertices are connected by at least $k$ rainbow monochromatic paths of order at least three,
and each pair of adjacent vertices are connected by at least $k-1$ rainbow monochromatic paths of order at least three,
there are at least $k[{n\choose 2}-e(G)]+(k-1)e(G)= k{n\choose 2}-e(G)$ such paths.
Since each $T_i$ of $\mathcal{S}$ provides ${|T_i|-1\choose 2}$ paths of order at least three, we have
$$\sum_{i\in[r]}{|T_i|-1\choose 2}\geq k{n\choose 2}-e(G).$$
Since $e(G)\leq \frac{n^2}{4}$,
\begin{align}
\sum_{i\in[r]}{|T_i|-1\choose 2}\geq k{n\choose 2}-\frac{n^2}{4}. \label{trian-0}
\end{align}
If $|T_i|=n-1$ for each $i\in[r]$, since $r> \frac{n}{2}+k$, $\Gamma$ wastes $r(n-3)>k(n-2)$ colors.
Thus, we assume that there are some trees of $\mathcal{S}$ with order less than $n-1$.
By Lemma \ref{expression}, there are integers $t,x$ with $t<r$ and $3\leq x\leq n-2$,
such that the expression $\sum_{i\in[r]}(|T_i|-2)$,
subjects to $(\ref{trian-0})$ and $3\leq |T_i|\leq n-1$,
is minimum when $|T_i|=n-1$ for $i\in [t]$, $|T_{t+1}|=x$ and $|T_j|=3$ for $j\in\{t+1,\cdots,r\}$.
By $(\ref{trian-0})$,
\begin{align}
t{n-2\choose 2}+{x-1\choose 2}+r-t-1\geq k{n\choose 2}-\frac{n^2}{4}. \label{trian-1}
\end{align}
This implies that $\Gamma$ wastes at least
\begin{align}
w(\Gamma)=t(n-3)+x-2+r-t-1 \label{trian-2}
\end{align}
colors.

If $t\geq k$, or $t=k-1$ and $x\geq \frac{n}{2}+k-1$,
then $\Gamma$ wastes at least
\begin{align*}
(k-1)(n-3)+x-2+r-k=k(n-2)+(r+x+1-2k-n)\geq k(n-2)
\end{align*}colors.

If $t=k-1$ and $x< \frac{n}{2}+k-1$, then suppose $y$ is a positive integer such that $x+y=\lceil\frac{n}{2}+k-1\rceil$.
Let $z=\lceil\frac{n}{2}+k-1\rceil$.
Recall that $n\geq 4k-1$ and $x\geq 3$, and then $x+z-3\geq 7$.
By Lemma \ref{cho-m}, ${z-1\choose 2}-{x-1\choose 2}\geq y-1$.
We have
\begin{align*}
\sum_{i\in[r]}{|T_i|-1\choose 2}&=(k-1){n-2\choose 2}+{x-1\choose 2}+r-k\\
&\leq (k-1){n-2\choose 2}+{z-1\choose 2}-y+1+r-k\\
&\leq (k-1){n-2\choose 2}+{\frac{n}{2}+k-1\choose 2}-y+1+r-k\\
&=\frac{4k-3}{8}n^2-\frac{8k-7}{4}n+\frac{(k-1)(k+2)}{2}+r-y\\
&=k{n\choose 2}-\frac{n^2}{4}-(\frac{n^2}{8}+\frac{6k-7}{4}n-\frac{(k+2)(k-1)}{2}) +r-y.
\end{align*}
By $(\ref{trian-0})$, we have
$$-(\frac{n^2}{8}+\frac{6k-7}{4}n-\frac{(k+2)(k-1)}{2}) +r-y\geq0,$$
i.e., $r\geq\epsilon+y$, where $\epsilon=\frac{n^2}{8}+\frac{6k-7}{4}n-\frac{(k+2)(k-1)}{2}$.
Then $\Gamma$ wastes
\begin{align*}
\sum_{i\in[r]}(|T_i|-2)&\geq(k-1)(n-3)+x-2+r-k\\
&\geq k(n-2)+(x+y-k+1)-n-k+\epsilon\\
&\geq k(n-2)-\frac{n}{2}-k+\epsilon
\end{align*}
colors. Let
$$h(n)=-\frac{n}{2}-k+\epsilon=\frac{1}{8}[n^2+(12k-18)n-4(k^2+3k-2)].$$
Then $h(n)\geq 0$ when $n\geq \frac{1}{2}(\sqrt{160k^2-384k+292}-12k+18)$.
Thus $h(n)\geq 0$ when $n\geq \frac{k}{2}+9$. Recall that $n\geq 4k-1$, and then $n\geq \frac{k}{2}+9$ holds for $k\geq 3$.
So $\Gamma$ wastes at least $k(n-2)$ colors if $k\geq 3$. If $k=2$, then $h(n)=\frac{1}{8}(n^2+6n-32)$. Since $n\geq 4k-1= 7$, $h(n)\geq 0$.
Therefore, $\Gamma$ wastes at least $k(n-2)$ colors when $k=2$.

If $t\leq k-2$, then the number of trees of order $3$ is at least $r-t-1$. Recall that $n\geq 4k-1\geq 7$ and $k\geq 2$. By $(\ref{trian-1})$,
\begin{align*}
r-t-1&\geq k{n\choose 2}-\frac{n^2}{4}-t{n-2\choose 2}-{x-1\choose 2}\\
&\geq k{n\choose 2}-\frac{n^2}{4}-(k-1){n-2\choose 2}\\
&\geq k(2n-3)+\frac{1}{4}(n^2-10n+12)\\
&\geq k(2n-3)-\frac{9}{4}\geq k(n-2).
\end{align*}
Thus, $\Gamma$ wastes at least $k(n-2)$ colors.
\end{proof}

For a graph $G$, we use $N_{uv}$ to denote the set of common neighbors of $u$ and $v$,
and let $n_{uv}=|N_{uv}|$, $n_G=\min\{n_{uv}:u,v\in V(G)\mbox{ and }u\neq v \}$.
\begin{proposition}\label{RMC-nG}
If $G$ is a graph with $\tau(G)\geq k$, then $rmc_k(G)\leq m-k(n-2)+n_G$.
\end{proposition}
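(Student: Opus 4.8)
The plan is to bound from below the number of wasted colors. Recall that $rmc_k(G)=m-\omega$, where $\omega=\sum_{i\in[r]}(|T_i|-2)$ and $T_1,\dots,T_r$ are the nontrivial color-induced trees of an extremal $RMC_k$-coloring $\Gamma$; so it suffices to prove $\omega\ge k(n-2)-n_G$. I would fix a pair $u,v$ of vertices with $n_{uv}=n_G$, and write $t_w$ for the number of trivial edges incident with a vertex $w$.

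First I would record two inequalities. For a fixed vertex $w$, every other vertex must lie together with $w$ in at least $k$ color classes (one class per rainbow monochromatic $wx$-path), and at most one of these classes is a trivial edge $wx$; summing the number of nontrivial trees through $w$ that reach each vertex gives $\sum_{T\ni w}(|T|-1)\ge k(n-1)-t_w$, so, since every nontrivial tree has at least three vertices,
$$\omega\ \ge\ \sum_{T\ni w}(|T|-2)\ \ge\ k(n-1)-t_w-d^n(w).$$
Globally, each pair of non-adjacent vertices needs $k$ and each adjacent pair needs $k-1$ rainbow monochromatic paths of order at least three, while a tree $T_i$ supplies exactly ${|T_i|-1\choose 2}$ of them, so $\sum_{i\in[r]}{|T_i|-1\choose 2}\ge k{n\choose 2}-m$.

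Then I would split into cases as in the proof of Proposition \ref{RMC-mian-1}. If some vertex $w$ satisfies $d^n(w)+t_w\le k+n_G$ (in particular whenever $\delta(G)\le k+n_G$, since $d^n(w)+t_w\le d_G(w)$), the local inequality immediately yields $\omega\ge k(n-1)-(k+n_G)=k(n-2)-n_G$. Otherwise every vertex has $d^n(w)+t_w\ge k+n_G+1$, whence $\sum_{i\in[r]}|T_i|=\sum_w d^n(w)\ge (k+1)n$; if the number $r$ of nontrivial trees is at most $\frac n2+k+\frac{n_G}{2}$, then $\omega=\sum_i|T_i|-2r\ge (k+1)n-2r\ge k(n-2)-n_G$ and we are done. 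The remaining subcase is that $r$ is large, and here I would feed the global inequality into Lemma \ref{expression} (together with Lemmas \ref{expression-1} and \ref{cho-m}): the minimum of $\sum_i(|T_i|-2)$ subject to $\sum_i{|T_i|-1\choose 2}\ge k{n\choose 2}-m$ and $3\le|T_i|\le n$ is attained by a family of near-spanning trees, and evaluating this minimum should give $\omega\ge k(n-2)-n_G$ once $m=e(G)$ is controlled through the common-neighbour structure at the pair $u,v$.

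The hard part is exactly this last subcase. The global constraint is quadratic while the target is linear, so the passage through Lemma \ref{expression} is tight only up to integrality; and, unlike the triangle-free setting of Proposition \ref{RMC-mian-1}, here $e(G)$ is \emph{not} bounded by $n^2/4$, so the deficit $k{n\choose 2}-e(G)$ can be small and a crude estimate loses precisely the term that $n_G$ must absorb. The central obstacle, therefore, is to show that the gap between the quadratic path-count and the linear waste is at most $n_G$; this is where the hypothesis that $u,v$ have the fewest common neighbours in $G$ must enter, presumably by counting order-three paths through $N_{uv}$ for the critical pair rather than relying on a global average.
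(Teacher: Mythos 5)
Your proposal does not close the proof, and the place where it stops is exactly where the whole difficulty of Proposition \ref{RMC-nG} lives. There are two concrete problems. First, your Case~B deduction ``every vertex has $d^n(w)+t_w\ge k+n_G+1$, whence $\sum_{i\in[r]}|T_i|=\sum_w d^n(w)\ge (k+1)n$'' silently assumes $t_w\le n_G$ for every vertex $w$; you never prove this, and there is no reason it holds (in the colorings of the $(k,s)$-perfectly-connected graphs of Remark~1, all edges between distinct parts $V_i,V_j$ are trivial, so a vertex can have $\Theta(n)$ trivial edges while $n_G\le ks$ stays bounded). Both of your final subcases rest on this unproved inequality. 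Second, and more fundamentally, your last subcase (large $r$) is open by your own admission, and the route you sketch for it cannot work as stated: the relaxation ``minimize $\sum_i(|T_i|-2)$ subject to $\sum_i\binom{|T_i|-1}{2}\ge k\binom{n}{2}-m$'' has an optimum that depends only on $m,n,k$, so it is structurally incapable of producing the graph-dependent term $n_G$. In Proposition \ref{RMC-mian-1} this scheme succeeds only because triangle-freeness supplies Tur\'{a}n's bound $m\le n^2/4$, which keeps the deficit $k\binom{n}{2}-m$ large; here no such bound is available, and the information that $u,v$ realize the \emph{minimum} number of common neighbours never enters your argument, although the statement is false without it being used somewhere.

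The paper's proof is the missing idea, and it is purely local --- it never uses the quadratic path count at all. Fix $u,v$ with $n_{uv}=n_G$ and set $A=V(G)-N[v]-\{u\}$, $B=N(v)-\{u\}$, $C=N_{uv}$. Consider three pairwise disjoint families of nontrivial trees: $\mathcal{T}$ (trees containing both $u$ and $v$), $\mathcal{H}$ (trees containing $v$ and a vertex of $A$ but not $u$), and $\mathcal{F}$ (trees containing $u$ and a vertex of $B$ but not $v$). Partition $A$ and $B$ into $A_0,\dots,A_k$ and $B_0,\dots,B_k$ according to how many trees of $\mathcal{T}$ contain each vertex. Then $\mathcal{T}$ wastes at least $\sum_i i(|A_i|+|B_i|)$ colors; $\mathcal{H}$ wastes at least $\sum_i(k-i)|A_i|$, because a tree of $\mathcal{H}$ must pass through $B$ to reach $A$; and $\mathcal{F}$ wastes at least $\sum_i(k-i)|B_i|-|C|$, where the correction $-|C|$ appears because each tree of $\mathcal{F}$ lying inside $B\cup\{u\}$ must consume a distinct nontrivial edge from $u$ into $C$, and there are at most $|C|$ such edges. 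Summing the three bounds gives waste at least $k(|A|+|B|)-|C|=k(n-2)-n_G$. Note how $n_G$ enters: not as slack absorbed by a quadratic estimate, but as the number of edges from $u$ into the common neighbourhood at the extremal pair --- precisely the mechanism your final sentence gestures toward but does not supply.
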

\begin{proof}
Suppose $\Gamma$ is an extremal $RMC_k$-coloring of $G$.
Let $u,v$ be two vertices of $G$ with $n_{uv}=n_G$. Let $V(G)-N[v]-\{u\}=A$, $N_{uv}=C$ and $N(v)-\{u\}=B$.
Then $C\subseteq B$.
Suppose that $\mathcal{T}$ is the set of nontrivial trees containing $u$ and $v$,
$\mathcal{F}$ is the set of nontrivial trees containing $u$
and at least one vertex of $B$ but not $v$,
and $\mathcal{H}$ is the set of nontrivial trees containing $v$
and at least one vertex of $A$ but not $u$.
Thus, $\mathcal{T},\mathcal{F}$ and $\mathcal{H}$ are pairwise disjoint.

The vertex set $A$ is partitioned into $k+1$ pairwise disjoint subsets
$A_0,\cdots,A_k$ (some sets may be empty)
such that every vertex of $A_i$ is in exactly $i$ nontrivial trees of $\mathcal{T}$ for $i\in\{0,\cdots,k-1\}$
and every vertex of $A_k$ is in at least $k$ nontrivial trees of $\mathcal{T}$.
The vertex set $B$ can also be partitioned into $k+1$ pairwise disjoint subsets $B_0,\cdots,B_k$ (some sets may be empty)
such that every vertex of $B_i$ is in exactly $i$ nontrivial trees of $\mathcal{T}$ for $i\in\{0,\cdots,k-1\}$
and every vertex of $B_k$ is in at least $k$ nontrivial trees of $\mathcal{T}$.
Then $\mathcal{T}$ wastes
$$w_1=\Sigma_{T\in\mathcal{T}}(|T|-2)\geq\Sigma_{i=0}^ki(|A_i|+|B_i|)$$ colors.

For every vertex $w$ of $A_i$, since $N(v)\cap A=\emptyset$,
there are at least $k$ nontrivial trees containing $v$ and $w$.
Since there are $i$ such trees in $\mathcal{T}$ for $i\neq k$,
there are at least $k-i$ nontrivial trees connecting $v$ and $w$ in $\mathcal{H}$.
Since every nontrivial tree of $\mathcal{H}$ must contain $v$ and a vertex of $B$, $\mathcal{H}$ wastes
$$w_2=\Sigma_{H\in\mathcal{H}}(|H|-2)\geq\Sigma_{i=0}^k(k-i)|A_i|$$
colors.

Let $C_i=\{w:w\in B_i\cap C\mbox{ and }uw\mbox{ is a trivial edge}\}$.
For each vertex $w$ of $B$, if $w\in B_i-C_i$,
then there are at least $k$ nontrivial trees containing $u$ and $w$;
if $w\in C_i$, there are at least $k-1$ nontrivial trees containing $u$ and $w$.
This implies that each vertex of $B_i-C_i$, $i\in\{0,\cdots,k-1\}$,
is in at least $k-i$ nontrivial trees of $\mathcal{F}$,
and each vertex of $C_i$ is in at least $k-i-1$ nontrivial trees of $\mathcal{F}$.
Now we partition $\mathcal{F}$ into two parts, $\mathcal{F}_1$ and $\mathcal{F}_2$,
such that
$$\mathcal{F}_1=\{F\in \mathcal{F}:V(F)\subseteq B\cup\{u\}\}$$
and $$\mathcal{F}_2=\{F\in \mathcal{F}:V(F)-(B\cup\{u\})\neq \emptyset\}.$$
Then for every $F$ of $\mathcal{F}_1$, $u$ connects a vertex of $C$ in $F$.
Thus, there are at most $|C|-\sum_{i=0}^k|C_i|$ trees in $\mathcal{F}_1$.
Therefore, $\mathcal{F}$ wastes
\begin{align*}
w_3&=\Sigma_{F\in\mathcal{F}}(|F|-2)\\
&\geq\Sigma_{i=0}^k(k-i)|B_i-C_i|+\Sigma_{i=0}^{k-1}(k-i-1)|C_i|-(|C|-\sum_{i=0}^{k-1}|C_i|)\\
&=-|C|+\Sigma_{i=0}^k(k-i)|B_i|
\end{align*}
colors.

According to the above discussion, $\Gamma$ wastes at least
$$w_1+w_2+w_3\geq-|C|+\Sigma_{i=0}^k[k(|A_i|+|B_i|)]= k(n-2)-n_G$$
colors.
Therefore, $rmc_k(G)\leq m-k(n-2)+n_G$.
\end{proof}

If $G$ is not an $s+1$-connected graph, then $n_G\leq s$.
Thus, we have the following result.
\begin{corollary}
If $G$ is a graph with $\tau(G)\geq k$ and $G$ is not $s+1$-connected, then $rmc_k(G)\leq m-k(n-2)+s$.
\end{corollary}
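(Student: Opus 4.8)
The claim is an immediate consequence of Proposition \ref{RMC-nG} together with the remark preceding the statement, so the plan is simply to verify that remark, namely that $G$ failing to be $(s+1)$-connected forces $n_G\leq s$, and then substitute.

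First I would recall Proposition \ref{RMC-nG}, which gives $rmc_k(G)\leq m-k(n-2)+n_G$ for every graph $G$ with $\tau(G)\geq k$. Hence it suffices to bound the quantity $n_G=\min\{n_{uv}:u,v\in V(G),\ u\neq v\}$ from above by $s$ and then plug in; the whole content of the corollary lies in this connectivity estimate.

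To prove $n_G\leq s$, I would split into two cases according to whether $G$ is complete. If $G$ is not complete, then, since $G$ is not $(s+1)$-connected, its vertex connectivity satisfies $\kappa(G)\leq s$, so $G$ has a vertex cut $W$ with $|W|\leq s$; let $G_1,G_2$ be two distinct components of $G-W$ and pick $u\in V(G_1)$ and $v\in V(G_2)$. Any common neighbor $w$ of $u$ and $v$ must lie in $W$: if instead $w\in V(G_1)$, then the edge $wv$ would join $V(G_1)$ to the vertex $v\in V(G_2)$, contradicting that $G_1,G_2$ are different components of $G-W$, and symmetrically $w\notin V(G_2)$. Thus $N_{uv}\subseteq W$, whence $n_G\leq n_{uv}\leq|W|\leq s$. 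If instead $G$ is complete, then failing to be $(s+1)$-connected means $\kappa(G)=n-1\leq s$, i.e. $n\leq s+1$; since every pair of vertices of $K_n$ has exactly $n-2$ common neighbors, $n_G=n-2\leq s-1\leq s$.

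Combining the two ingredients yields $rmc_k(G)\leq m-k(n-2)+n_G\leq m-k(n-2)+s$, as required. There is essentially no hard step here: the argument is a routine vertex-cut observation, and the only point that needs a moment of care is the complete-graph boundary case of the connectivity convention, which is dispatched as above.
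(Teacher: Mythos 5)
Your proof is correct and takes essentially the same route as the paper: the paper obtains this corollary directly from Proposition \ref{RMC-nG} together with the one-line observation that failing to be $(s+1)$-connected forces $n_G\leq s$. Your write-up merely fleshes out that observation via the standard vertex-cut argument (plus the complete-graph boundary case, which the paper leaves implicit), so there is nothing substantively different to compare.
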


The next theorem decreases this upper bound by one when $s=1$.
\begin{proposition}\label{RMC-vertex-cut}
If $G$ has a cut vertex and $\tau(G)\geq k\geq2$, then $rmc_k(G)=m-k(n-2)$.
\end{proposition}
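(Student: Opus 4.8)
The plan is to prove the two inequalities separately; the lower bound $rmc_k(G)\ge m-k(n-2)$ is already in hand (it holds whenever $\tau(G)\ge k$), so the whole task is to show $rmc_k(G)\le m-k(n-2)$. Let $c$ be a cut vertex and let $C_1,\dots,C_\ell$ ($\ell\ge2$) be the components of $G-c$, with vertex sets $V_1,\dots,V_\ell$. Two vertices lying in different components share no common neighbour except possibly $c$, so $n_G\le1$ and Proposition \ref{RMC-nG} already gives $rmc_k(G)\le m-k(n-2)+1$. The entire work is to shave off this surplus $1$. I would split according to whether $c$ is adjacent to every other vertex.

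The easy case is when $c$ is \emph{not} adjacent to some vertex $x$, say $x\in V_a$. Picking any $v$ in another component $V_b$, any common neighbour of $x$ and $v$ must be adjacent both to a vertex of $V_a$ and to a vertex of $V_b$, hence equal to $c$; but $x\not\sim c$, so $N_{xv}=\varnothing$ and $n_G=0$. Proposition \ref{RMC-nG} then yields $rmc_k(G)\le m-k(n-2)$ immediately. The remaining case, where $c$ is adjacent to every other vertex, is the one I expect to be the real obstacle, and I would handle it by re-running the counting in the proof of Proposition \ref{RMC-nG} with a \emph{tailored} pair $u,v$ that forces the subfamily $\mathcal{F}_1$ from that proof to be empty.

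To build the pair, first note that $c$ lies on some nontrivial color-induced tree: for $u\in V_a$, $v\in V_b$ with $a\ne b$, every monochromatic $uv$-path passes through $c$ using two edges at $c$ of one colour, so that colour is nontrivial and its tree contains $c$. Fix a nontrivial edge $uc$, let $F^\ast$ be its color-induced tree, put $W=V(F^\ast)\setminus\{u,c\}\ne\varnothing$, and say $u\in V_a$. Now choose $v$ as follows: if $W\subseteq V_a$, take $v$ in any component other than $C_a$; otherwise take $v\in W\setminus V_a$. In both sub-cases $u,v$ lie in different components, so $N_{uv}=\{c\}$. Running the proof of Proposition \ref{RMC-nG} with this $u,v$ and its notation $\mathcal{T},\ \mathcal{F}=\mathcal{F}_1\cup\mathcal{F}_2,\ B=N(v),\ C=\{c\}$, I claim $\mathcal{F}_1=\varnothing$: since $N(u)\cap B\subseteq(V_a\cup\{c\})\cap(V_b\cup\{c\})=\{c\}$, every tree of $\mathcal{F}_1$ must contain the edge $uc$, so the only candidate is $F^\ast$; but if $W\subseteq V_a$ then $F^\ast$ has a vertex of $V_a$ outside $B\cup\{u\}$, whence $F^\ast\in\mathcal{F}_2$, while if $v\in W$ then $v\in V(F^\ast)$, whence $F^\ast\in\mathcal{T}$ and is not in $\mathcal{F}$ at all. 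Moreover $uc$ is nontrivial, so $c$ lies in no $C_i$ and $\sum_i|C_i|=0$.

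Feeding $\mathcal{F}_1=\varnothing$ and $\sum_i|C_i|=0$ back into the estimate for $w_3$ removes the correction term $-(|C|-\sum_i|C_i|)$ and gives $w_3\ge\sum_i(k-i)|B_i|$; together with the unchanged bounds $w_1\ge\sum_i i(|A_i|+|B_i|)$ and $w_2\ge\sum_i(k-i)|A_i|$ this yields $w_1+w_2+w_3\ge k(|A|+|B|)=k(n-2)$, so $\Gamma$ wastes at least $k(n-2)$ colours and $rmc_k(G)\le m-k(n-2)$. The delicate point, and the crux of the whole argument, is precisely the verification in the universal-$c$ case that the tailored choice of $v$ empties $\mathcal{F}_1$ while keeping $N_{uv}=\{c\}$: this is exactly what upgrades Proposition \ref{RMC-nG}'s bound $m-k(n-2)+1$ to the sharp value $m-k(n-2)$. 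The $w_1,w_2,w_3$ estimates themselves are inherited verbatim from that proof and need no recomputation.
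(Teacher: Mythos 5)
Your proof is correct, but it takes a genuinely different route from the paper's. The paper argues directly and uniformly, with no case distinction and no appeal to Proposition \ref{RMC-nG}: it fixes the cut vertex $a$ and an arbitrary vertex $w$ in one component $A_1$ of $G-a$, observes that every nontrivial tree joining $w$ (respectively, joining $A_1-w$) to the other components must pass through $a$, partitions $A_1-w$ according to how many trees of the first family contain each vertex, and adds the two incidence counts to get waste at least $k\bigl(\sum_{i\geq2}|A_i|+|A_1|-1\bigr)=k(n-2)$. You instead split on whether the cut vertex $c$ is adjacent to all other vertices: when it is not, Proposition \ref{RMC-nG} applies verbatim with $n_G=0$; when it is, you re-run that proposition's proof with a tailored non-adjacent pair $u,v$ in different components of $G-c$, chosen via the nontrivial edge $uc$ and its color tree $F^\ast$ so that $\mathcal{F}_1=\varnothing$ and every $C_i=\varnothing$, which removes exactly the $-|C|$ slack. (All the steps check out: the pair is non-adjacent, so the $w_1,w_2$ bounds transfer; any member of $\mathcal{F}_1$ would have to use the edge $uc$, hence equal $F^\ast$, which your choice of $v$ places in $\mathcal{T}$ or in $\mathcal{F}_2$; and the proof of Proposition \ref{RMC-nG} never actually needs $n_{uv}=n_G$, only the fixed pair.) Both proofs rest on the same incidence-counting technique, but yours buys a reusable principle --- the $+n_G$ slack in Proposition \ref{RMC-nG} can be recovered whenever one can exhibit a pair that empties $\mathcal{F}_1$ --- at the cost of a case analysis and a more delicate choice of the pair, whereas the paper's standalone count is shorter and needs no cases because every relevant tree is funneled through the cut vertex.
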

\begin{proof}
Let $\Gamma$ be an extremal $RMC_k$-coloring of $G$.
Suppose that $a$ is a vertex cut of $G$ and $A_1,\cdots,A_t$ are components of $G-\{a\}$.
Let $w$ be a vertex of $A_1$,
and let $\mathcal{T}=\{T_1,\cdots,T_r\}$ be the set of nontrivial trees connecting $w$
and some vertices of $\bigcup_{i=2}^tA_i$.
Then each $T_i$ contains $a$.
Suppose $\{S_0,S_1,\cdots,S_k\}$ is a vertex partition of $A_1-w$
such that each vertex of $S_i$ is in exactly $i$ nontrivial trees of $\mathcal{T}$ for
$i=0,1\cdots,k-1$ and each vertex of $S_k$ is in at least $k$ nontrivial trees of $\mathcal{T}$.
Since each vertex of $\bigcup_{i=2}^tA_i$ connects $w$ by at least $k$ trees of $\mathcal{T}$, $\mathcal{T}$ wastes
$$\sum_{i\in [r]}(|T_i|-2)\geq k\sum_{i=2}^t|A_i|+\sum_{i=0}^ki|S_i|$$
colors.

Let $\mathcal{F}=\{F_1,\cdots,F_l\}$ be the set of nontrivial trees connecting at least one vertex of $\bigcup_{i=2}^tA_i$
and at least one vertex of $A_1$ but not $w$.
Then $\mathcal{T}\cap \mathcal{F}=\emptyset$.
Since $a$ is a cut vertex of $G$, each $F_i$ of $\mathcal{F}$ contains $a$.
Since $\mathcal{T}$ provides at most $i$ rainbow monochromatic paths connecting every vertex of $S_i$ and every vertex of $\bigcup_{i=2}^tA_i$,
each vertex of $S_i$ is in at least $k-i$ trees of $\mathcal{F}$. Then $\mathcal{F}$ wastes at least
$$\sum_{i\in [l]}(|F_i|-2)\geq \sum_{i=0}^{k}(k-i)|S_i|$$
colors.
Thus, $\Gamma$ wastes at least
$$\sum_{i\in [r]}(|T_i|-2)+\sum_{i\in [l]}(|F_i|-2)\geq k(\sum_{i=2}^t|A_i|+\sum_{i=0}^k|S_i|)= k(n-2)$$
colors, $rmc_k(G)=m-k(n-2)$.
\end{proof}

\begin{proposition}
If $G$ is not a $k+1$-edge-connected graph and $\tau(G)\geq k\geq2$, then $rmc_k(G)=m-k(n-2)$.
\end{proposition}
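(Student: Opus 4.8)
The plan is to prove only the upper bound $rmc_k(G)\le m-k(n-2)$, since the matching lower bound is already guaranteed by the corollary following Theorem~\ref{SN} (which applies because $\tau(G)\ge k$). First I would pin down the cut. Because $\tau(G)\ge k$ forces $G$ to contain $k$ edge-disjoint spanning trees, and every nontrivial edge cut must meet each of them, the edge-connectivity satisfies $\lambda(G)\ge k$; on the other hand $G$ is not $(k+1)$-edge-connected, so $\lambda(G)\le k$. Hence $\lambda(G)=k$, and I may fix a minimum edge cut, i.e. a bipartition $V(G)=X\cup\overline X$ with both parts nonempty and with exactly $k$ edges $e_1,\dots,e_k$ joining $X$ and $\overline X$.

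Now let $\Gamma$ be an extremal $RMC_k$-coloring; by the proposition asserting that each color-induced subgraph is a tree, every color class is a tree. The heart of the argument is a counting observation at the cut. Take any $w\in X$ and any $v\in\overline X$ and consider their $k$ rainbow monochromatic paths. Each such path joins the two sides and so uses at least one of $e_1,\dots,e_k$; distinct colors force these monochromatic paths to be pairwise edge-disjoint, so the total number of cut edges they use is at most $k$. Since there are $k$ paths each using at least one cut edge, each path uses \emph{exactly} one cut edge and all $k$ cut edges get used. As a path's color equals the color of the unique cut edge it contains and the $k$ path-colors are distinct, I conclude that $e_1,\dots,e_k$ receive $k$ pairwise distinct colors; write $T_1,\dots,T_k$ for the corresponding color-induced trees, with $e_j\in T_j$.

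From here I would show each $T_j$ spans $G$. Fix $j$ and an arbitrary $w\in X$; choosing any $v\in\overline X$, the bundle above contains a monochromatic path of the color of $e_j$ running from $w$ to $v$ through $e_j$, and this path lies inside $T_j$, so $w\in V(T_j)$. Symmetrically every $v\in\overline X$ lies in $V(T_j)$. As $w$ and $v$ range over $X$ and $\overline X$, every vertex of $G$ lies in $V(T_j)$, so $T_j$ is a spanning tree and $|T_j|=n$. Since $T_1,\dots,T_k$ are $k$ distinct (hence nontrivial) color-induced trees, $\Gamma$ wastes at least $\sum_{j\in[k]}(|T_j|-2)=k(n-2)$ colors, the remaining nontrivial trees contributing only nonnegative waste. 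Therefore $rmc_k(G)=m-\omega\le m-k(n-2)$, and together with the lower bound this gives the claimed equality.

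The only delicate point, and the step I would write most carefully, is the cut-edge counting: one must use that distinct-colored monochromatic paths cannot share an edge in order to bound the total number of cut-edge uses by $k$, and then that each of the $k$ paths needs at least one cut edge in order to force the count to be exactly one per path. Everything afterward, namely the passage from ``each cut edge sits in its own spanning color-tree'' to the waste bound, is routine bookkeeping; in contrast to the triangle-free case, no optimization estimate such as Lemma~\ref{expression} is needed here.
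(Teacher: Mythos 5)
Your proof is correct and follows essentially the same route as the paper: both fix a $k$-edge cut (which exists since $\tau(G)\ge k$ forces $\lambda(G)=k$), use edge-disjointness of the $k$ distinctly-colored monochromatic paths across the cut to show each path uses exactly one cut edge, conclude that the $k$ cut edges lie in $k$ distinct color-induced trees which must all be spanning, and hence that the coloring wastes at least $k(n-2)$ colors. Your write-up is in fact somewhat more careful than the paper's (e.g., in making explicit that the cut edges receive pairwise distinct colors before identifying the trees $T_1,\dots,T_k$), but the underlying argument is identical.
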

\begin{proof}
Since $\tau(G)\geq k$, $G$ is $k$-edge-connected.
Thus, $G$ has an edge cut $S$ such that $|S|=k$.
Then $G-S$ has two components, say $D_1$ and $D_2$.
Let $x\in V(D_1)$ and $y\in V(D_2)$.
For an extremal $RMC_k$-coloring of $G$, there are $k$ color-induced trees (say $T_1,\cdots,T_k$) containing $x$ and $y$,
i.e., each $T_i$ contains exactly one edge of $S$.
For each $u\in V(D_1)$, since there are $k$ rainbow monochromatic $uy$-paths,
each path contains exactly one edge of $S$.
Thus each $T_i$ contains $u$.
By the same reason, each $T_i$ contains each vertex of $V_2$.
Therefore, each $T_i$ is a spanning tree of $G$, and so $rmc_k(G)=m-k(n-2)$.
\end{proof}
\begin{proposition} [\cite{CY}]\label{C-n}
If $G$ is a cycle of order $n$, then $mc(\overline{G})\geq e(\overline{G})-\lceil\frac{2n}{3}\rceil$.
\end{proposition}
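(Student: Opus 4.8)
The plan is to construct an explicit $MC$-coloring (i.e.\ $RMC_1$-coloring) of $\overline{G}$ that wastes at most $\lceil\frac{2n}{3}\rceil$ colors; since in any coloring whose color classes are trees together with trivially colored single edges the number of colors equals $e(\overline{G})$ minus the waste $\sum_{T}(|T|-2)$, exhibiting one such coloring gives the claimed lower bound. Write $G=C_n$ on the vertices $\mathbb{Z}_n$ with edges $\{i,i+1\}$. The first step is a reduction: in \emph{any} edge-coloring, an adjacent pair of $\overline{G}$ is automatically monochromatically connected by the single edge joining them, so the only pairs that need care are the non-adjacent pairs of $\overline{G}$, which are exactly the $n$ pairs $\{i,i+1\}$, i.e.\ the edges of $C_n$. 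Hence it suffices to color $\overline{G}$ so that each consecutive pair $\{i,i+1\}$ lies in a common color-induced tree, while controlling the total waste.

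The second step is to cover these $n$ consecutive pairs by $4$-vertex trees. For a block of three consecutive pairs $\{i,i+1\},\{i+1,i+2\},\{i+2,i+3\}$, the four vertices $i,i+1,i+2,i+3$ span the path $(i+2)-(i)-(i+3)-(i+1)$ inside $\overline{G}$: its three edges have cyclic differences $2,3,2$ and so are non-edges of $C_n$, hence genuine edges of $\overline{G}$. This $4$-vertex tree contains all of $i,i+1,i+2,i+3$, so it monochromatically connects the three consecutive pairs while wasting $4-2=2$ colors. Partitioning the $n$ cyclic edges of $C_n$ into $\lfloor n/3\rfloor$ blocks of three consecutive pairs (with a short leftover block when $3\nmid n$), I assign each full block its own such tree and color every remaining edge of $\overline{G}$ with a fresh distinct color. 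Consecutive blocks share a single vertex and hence no edge, so all these trees are edge-disjoint, and the resulting coloring is a legitimate $MC$-coloring.

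The third step is to tally the waste. When $n=3q$, the $q$ block-trees waste $2q=\lceil\frac{2n}{3}\rceil$. When $n\equiv 1\pmod 3$ the leftover is one pair $\{i,i+1\}$, covered by a $3$-vertex path $i-w-(i+1)$ through a common neighbor $w$, wasting $1$ and giving total $2q+1=\lceil\frac{2n}{3}\rceil$. When $n\equiv 2\pmod 3$ the leftover is two consecutive pairs on three vertices, covered by a $4$-vertex star centered at a common neighbor $w$ of those three vertices, wasting $2$ and giving total $2q+2=\lceil\frac{2n}{3}\rceil$. In every case the total waste is exactly $\lceil\frac{2n}{3}\rceil$, whence $mc(\overline{G})\ge e(\overline{G})-\lceil\frac{2n}{3}\rceil$.

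The main obstacle is not the counting but the bookkeeping of existence and edge-disjointness in the leftover blocks: I must check that the helper vertex $w$ exists, which it does because $w$ only has to avoid the (constantly many) cyclic neighbors of the endpoints it connects, and that the two or three edges incident to $w$ are not already used by the neighboring block-trees, which excludes only finitely many choices of $w$. Both hold once $n$ is not too small. The remaining small values of $n$, where such a $w$ might fail to exist, can be handled directly, since there the target $e(\overline{G})-\lceil\frac{2n}{3}\rceil$ is weak (indeed non-positive for $n\le 6$) and the trivial spanning-tree $MC$-coloring already attains it.
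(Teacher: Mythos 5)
The paper itself offers no proof of Proposition \ref{C-n}: it is quoted directly from \cite{CY}, so there is no in-paper argument to compare against, and your proposal must stand on its own. It does. The reduction to the $n$ consecutive pairs is valid (adjacent pairs of $\overline{G}$ are always monochromatically connected by their own edge); the $4$-vertex tree $(i+2)-(i)-(i+3)-(i+1)$ uses edges of cyclic difference $2,3,2$, hence edges of $\overline{G}$ once $n\geq 7$; any two blocks share at most one vertex when $n\geq 9$, so the block trees are edge-disjoint; and the waste tally $2\lfloor n/3\rfloor$ plus $1$ or $2$ for the leftover equals $\lceil\frac{2n}{3}\rceil$ in all three residue classes. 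Your helper vertex $w$ also exists for $n\geq 9$, since the block edges incident to the leftover endpoints have cyclic difference $2$ or $3$, so $w$ only needs to sit at cyclic distance at least $4$ from the two or three endpoints it serves. The one genuine slip is the parenthetical justification of the small cases: the claim that $e(\overline{G})-\lceil\frac{2n}{3}\rceil$ is non-positive for $n\leq 6$ is false (for $n=6$ it equals $9-4=5$, and for $n=5$ it equals $1$). The fallback itself is nevertheless sound, for the right reason: $\lceil\frac{2n}{3}\rceil\geq n-2$ holds exactly when $n\leq 8$, so for every small $n$ with $\overline{C_n}$ connected the trivial spanning-tree coloring already gives $mc(\overline{G})\geq e(\overline{G})-(n-2)\geq e(\overline{G})-\lceil\frac{2n}{3}\rceil$; you should phrase the small-case argument this way rather than via non-positivity. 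With that correction, your covering-by-small-trees construction is a complete and essentially standard proof of the cited bound.
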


By Proposition \ref{C-n}, if $P$ is a Hamiltonian path of $K_n$ with $n\geq 4$,
then $mc(G\backslash P)\geq e(G\backslash P)-\lceil\frac{2n}{3}\rceil$.
The following result is obvious.
\begin{corollary}
$rmc_2(K_n)\geq \lfloor\frac{3n^2-13n}{6}\rfloor+2$, $n\geq 4$.
\end{corollary}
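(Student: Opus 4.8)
The plan is to build an explicit $RMC_2$-coloring of $K_n$ out of two edge-disjoint layers and then simply count the colors it uses. First I would fix a Hamiltonian path $P=v_1v_2\cdots v_n$ of $K_n$ (which exists since $n\geq 4$) and color all of its $n-1$ edges with a single color, say color $1$. For any two vertices $v_i,v_j$ the subpath $v_iv_{i+1}\cdots v_j$ is then a monochromatic $v_iv_j$-path, so this layer already supplies one monochromatic path between every pair of vertices.

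For the second layer I would color the remaining edges, namely those of the graph $K_n-E(P)$, by an $MC$-coloring that uses only fresh colors, all distinct from $1$. Such a coloring makes $K_n-E(P)$ monochromatically connected, and therefore furnishes, for every pair $v_i,v_j$, a monochromatic $v_iv_j$-path lying inside $K_n-E(P)$. Since this path uses only edges of $E(K_n)\setminus E(P)$ it is edge-disjoint from the path produced by the first layer, and since its color differs from $1$ the two paths together constitute $2$ rainbow monochromatic $v_iv_j$-paths. Hence the combined coloring is an $RMC_2$-coloring of $K_n$ using $1+|\Gamma(K_n-E(P))|$ colors; choosing the $MC$-coloring to attain $mc(K_n-E(P))$ gives $rmc_2(K_n)\geq 1+mc(K_n-E(P))$.

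To finish I would lower-bound $mc(K_n-E(P))$ by invoking the remark preceding the statement (a consequence of Proposition \ref{C-n}), which yields $mc(K_n-E(P))\geq e(K_n-E(P))-\lceil\tfrac{2n}{3}\rceil$ with $e(K_n-E(P))={n\choose 2}-(n-1)$. Substituting gives
\[
rmc_2(K_n)\ \geq\ 1+{n\choose 2}-(n-1)-\Big\lceil\tfrac{2n}{3}\Big\rceil\ =\ \tfrac{n^2-3n}{2}+2-\Big\lceil\tfrac{2n}{3}\Big\rceil .
\]

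The remaining task is the closed-form simplification, which is really the only delicate point. Since $\tfrac{n^2-3n}{2}={n\choose 2}-n$ is an integer and $m-\lceil x\rceil=\lfloor m-x\rfloor$ for every integer $m$, I would rewrite $\tfrac{n^2-3n}{2}-\lceil\tfrac{2n}{3}\rceil=\lfloor\tfrac{3n^2-9n}{6}-\tfrac{4n}{6}\rfloor=\lfloor\tfrac{3n^2-13n}{6}\rfloor$, which delivers exactly $rmc_2(K_n)\geq\lfloor\tfrac{3n^2-13n}{6}\rfloor+2$. Everything else is immediate: the two monochromatic paths are edge-disjoint and differently colored by construction, the bound on $mc(K_n-E(P))$ is quoted rather than re-proved, and since $\tau(K_n)=\lfloor n/2\rfloor\geq 2$ for $n\geq 4$, Theorem \ref{SN} confirms that an $RMC_2$-coloring exists, so the argument is consistent.
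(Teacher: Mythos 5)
Your proposal is correct and is exactly the argument the paper has in mind: the paper's remark after Proposition~\ref{C-n} sets up $mc(K_n\setminus P)\geq e(K_n\setminus P)-\lceil\frac{2n}{3}\rceil$ for a Hamiltonian path $P$, and the corollary is then declared ``obvious'' precisely because one colors $P$ with a single color, colors $K_n-E(P)$ with an extremal $MC$-coloring in fresh colors, and counts $1+mc(K_n-E(P))$ colors as you do. Your write-up simply makes explicit the edge-disjointness/color-distinctness check and the floor-function arithmetic that the paper leaves to the reader.
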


{\bf Remark 1:} The above corollary implies that there are indeed some graphs with rainbow
monochromatic $k$-edge-connection number greater that the lower bound.
In fact, for any $k\geq2$ and $s\geq 2$, there exist graphs with rainbow monochromatic $k$-edge-connection number greater than or equal to $m-k(n-2)+s-1$.
We construct the {\em $(k,s)$-perfectly-connected} graphs below.
A graph $G$ is called a $(k,s)$-perfectly-connected graph if $V(G)$ can be partitioned into
$s+1$ parts $\{v\},V_1,\cdots,V_s$, such that $\tau(G[V_i])\geq k$,
$V_1,\cdots,V_s$ induces a corresponding complete $s$-partite graph (call it $K^s$),
and $v$ has precisely $k$ neighbors in each $V_i$.
Since $\tau(G[V_i])\geq k$, each $G[V_i]$ has $k$ edge-disjoint spanning trees (say $T_1^i,\cdots,T_k^i$).
Let the $k$ neighbors of $v$ in $V_i$ be $u_1^i,\cdots,u_k^i$ and let $e_1^i=vu_1^i,\cdots,e_k^i=vu_k^i$.
Let $T_j=\bigcup_{i\in[s]}e_j^i\cup\bigcup_{i\in[s]}T_j^i$ for $j\in\{2,\cdots,k\}$.
Let $\Gamma$ be an edge-coloring of $G$ such that $\Gamma(T_1^i\cup e_1^i)=i$ for $i\in[s]$, $\Gamma(T_j)=s+j-1$
for $j\in\{2,\cdots,k\}$, and the other edges are trivial.
Then $\Gamma$ is an $RMC_k$-coloring of $G$ and $|\Gamma(G)|=m-k(n-2)+s-1$,
and thus $rmc_k(G)\geq m-k(n-2)+s-1$. $\square$

We propose an open problem below.
If the answer for the problem is true, then it will cover our main Theorem \ref{RMC-main}.
\begin{problem}
For an integer $k\geq 2$ and a graph $G$ with $\tau(G)\geq k$, does $rmc_k(G)\leq mc(G)-(k-1)(n-2)$ hold ?
More generally, does $rmc_k(G)\leq rmc_t(G)-(k-t)(n-2)$ hold for any integer $1\leq t<k$ ?
\end{problem}

\section{Random results}

The following result can be found in text books.
\begin{lemma}[\cite{AS},~Chernoff Bound]\label{CHER-B}
If $X$ is a binomial random variable with expectation $\mu$, and $0<\delta<1$,
then
$$Pr[X<(1-\delta)\mu]\leq \exp(-\frac{\delta^2\mu}{2})$$
and if $\delta>0$,
$$Pr[X>(1+\delta)\mu]\leq \exp(-\frac{\delta^2\mu}{2+\delta}).$$
\end{lemma}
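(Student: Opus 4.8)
The plan is to use the exponential-moment (Chernoff) method, writing $X=\sum_{i=1}^{n}X_i$ as a sum of independent Bernoulli variables with $\mu=E[X]=\sum_i E[X_i]$. For the upper tail, I would fix any real $\lambda>0$; since $x\mapsto e^{\lambda x}$ is increasing, Markov's inequality gives
$$Pr[X>(1+\delta)\mu]=Pr\big[e^{\lambda X}>e^{\lambda(1+\delta)\mu}\big]\leq \frac{E[e^{\lambda X}]}{e^{\lambda(1+\delta)\mu}}.$$
By independence $E[e^{\lambda X}]=\prod_i E[e^{\lambda X_i}]$, and for each Bernoulli term $E[e^{\lambda X_i}]=1+E[X_i](e^{\lambda}-1)\leq \exp\big(E[X_i](e^{\lambda}-1)\big)$ by the elementary inequality $1+x\leq e^x$. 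Multiplying over $i$ yields $E[e^{\lambda X}]\leq \exp\big(\mu(e^{\lambda}-1)\big)$, so the right-hand side above is at most $\exp\big(\mu(e^{\lambda}-1)-\lambda(1+\delta)\mu\big)$.

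The next step is to optimize the free parameter. Differentiating the exponent in $\lambda$ and setting it to zero gives $e^{\lambda}=1+\delta$, i.e.\ $\lambda=\ln(1+\delta)>0$, which is admissible since $\delta>0$; substituting back produces the multiplicative form
$$Pr[X>(1+\delta)\mu]\leq\Big(\frac{e^{\delta}}{(1+\delta)^{1+\delta}}\Big)^{\mu}.$$
It then remains only to check the one-variable inequality $\delta-(1+\delta)\ln(1+\delta)\leq -\tfrac{\delta^2}{2+\delta}$, which after exponentiating and raising to the power $\mu$ gives exactly $\exp(-\tfrac{\delta^2\mu}{2+\delta})$. The lower tail is treated symmetrically: applying Markov to $e^{-\lambda X}$ with $\lambda>0$ and the same moment bound gives $\exp\big(\mu(e^{-\lambda}-1)+\lambda(1-\delta)\mu\big)$, and optimizing at $e^{-\lambda}=1-\delta$ (admissible since $0<\delta<1$) yields $\big(e^{-\delta}/(1-\delta)^{1-\delta}\big)^{\mu}$, reducing the claim to $-\delta-(1-\delta)\ln(1-\delta)\leq -\tfrac{\delta^2}{2}$.

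I expect the genuine work to lie in these two analytic inequalities rather than in the probabilistic machinery, which is routine once $1+x\leq e^x$ is in hand. The lower-tail estimate is the cleaner one: expanding $\ln(1-\delta)=-\sum_{j\geq1}\delta^j/j$ and collecting terms shows that $-\delta-(1-\delta)\ln(1-\delta)+\tfrac{\delta^2}{2}$ has a manifestly nonpositive power series on $(0,1)$. The upper-tail inequality with the $2+\delta$ denominator is more delicate; I would set $\phi(\delta)=\delta-(1+\delta)\ln(1+\delta)+\tfrac{\delta^2}{2+\delta}$, note $\phi(0)=0$, and show $\phi'(\delta)\leq 0$ on $(0,1)$ by a short sign analysis, so that $\phi\leq 0$ throughout.
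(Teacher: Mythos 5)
The paper does not prove this lemma at all: it is quoted verbatim from Alon--Spencer (\cite{AS}) with the remark that it ``can be found in text books,'' so there is no internal proof to compare against. Your argument is the standard exponential-moment (Chernoff) proof, and it is correct: the reduction to the multiplicative bounds $\bigl(e^{\delta}/(1+\delta)^{1+\delta}\bigr)^{\mu}$ and $\bigl(e^{-\delta}/(1-\delta)^{1-\delta}\bigr)^{\mu}$ is exactly right, the lower-tail inequality follows as you say because $-\delta-(1-\delta)\ln(1-\delta)+\delta^{2}/2=-\sum_{j\geq 3}\delta^{j}/\bigl(j(j-1)\bigr)\leq 0$, and the upper-tail inequality follows from your sign analysis, since $\phi''(\delta)=-\frac{1}{1+\delta}+\frac{8}{(2+\delta)^{3}}\leq 0$ by the expansion $(2+\delta)^{3}=8+12\delta+6\delta^{2}+\delta^{3}\geq 8(1+\delta)$, whence $\phi'\leq\phi'(0)=0$ and $\phi\leq\phi(0)=0$. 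One small correction: the lemma asserts the upper tail for \emph{all} $\delta>0$, so run that sign analysis on $(0,\infty)$ rather than on $(0,1)$ as you wrote; nothing in the argument changes, since the inequality $(2+\delta)^{3}\geq 8(1+\delta)$ holds for every $\delta\geq 0$.
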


Let $p=\frac{\log n+a}{n}$. The authors in \cite{PA} proved that
$$Pr[G(n,p)\mbox{ is connected almost surely}]=
\begin{cases}
1, & a\longrightarrow +\infty;\\
e^{-e^{-a}}, & |a|=O(1);\\
0, & a\longrightarrow -\infty.
\end{cases}$$
Thus, $p=\frac{\log n}{n}$ is the threshold function for $G(n,p)$ being connected.

A sufficient condition for $G(n,p)$ to have an $RMC_k$-coloring almost surely is that
$T(G(n,p))\geq k$ almost surely. For the STP number problem of $G(n,p)$,
Gao et al. proved the following results.
\begin{lemma}[\cite{PXC}]\label{STP-general}
For every $p\in[0,1]$, we have
$$T(G(n,p))=\min\{\delta(G(n,p)),
\lfloor\frac{e(G(n,p))}{n-1}\rfloor\}$$
almost surely.
\end{lemma}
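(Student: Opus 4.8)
The plan is to prove the two inequalities separately, noting first that $T(G)\le\min\{\delta(G),\lfloor e(G)/(n-1)\rfloor\}$ holds \emph{deterministically} for every graph $G$: a vertex $v$ of degree $\delta(G)$ meets at most $\delta(G)$ pairwise edge-disjoint spanning trees, and $k$ edge-disjoint spanning trees consume $k(n-1)$ edges, forcing $k\le\lfloor e(G)/(n-1)\rfloor$. Hence only the matching lower bound
$$T(G(n,p))\ \ge\ k:=\min\Big\{\delta(G(n,p)),\ \big\lfloor\tfrac{e(G(n,p))}{n-1}\big\rfloor\Big\}$$
requires a probabilistic argument, and I would derive it from the Nash--Williams--Tutte criterion (Theorem \ref{N-W-T}).

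By Theorem \ref{N-W-T}, it suffices to show that, almost surely, every vertex partition $\cP=\{V_1,\dots,V_t\}$ with $t\ge 2$ satisfies $e(G/\cP)\ge k(t-1)$. I would first dispose of the sparse regime: when $p$ lies below the connectivity threshold, $G(n,p)$ has an isolated vertex almost surely, so $\delta=0$, $k=0$, and the inequality is vacuous; thus assume $p$ is large enough that $\delta\ge 1$. I would also reduce to partitions whose parts induce connected subgraphs: refining a partition by splitting each disconnected part into its components leaves $e(G/\cP)$ unchanged while increasing $t$, so the desired inequality for the (connected) refinement, with its larger $t$, implies it for the original partition.

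The two extreme partition types are immediate. For $t=n$ (all singletons) one has $e(G/\cP)=e(G)\ge k(n-1)$ directly from the definition of the floor, and for $t=2$ every partition is an edge cut, whose size is at least the edge-connectivity of $G(n,p)$; since the edge-connectivity equals $\delta$ almost surely and $k\le\delta$, every such cut has at least $k$ edges. The substance of the proof is the \emph{intermediate} range $2<t<n$. For a connected partition with part sizes $n_1\ge\cdots\ge n_t$, the number $e(G/\cP)$ of crossing edges is a binomial variable with mean $\mu=p\big(\binom{n}{2}-\sum_{i}\binom{n_i}{2}\big)$. I would show that this mean exceeds $k(t-1)$ with room to spare, apply the Chernoff bound (Lemma \ref{CHER-B}) to bound the probability that a \emph{fixed} partition fails, and then take a union bound over all partitions, grouped by type $(t,n_1)$ using the count of partitions of $[n]$ into $t$ connected parts.

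The hard part will be making this last union bound converge. The number of partitions into $t$ parts grows super-exponentially in $n$, so the Chernoff tail, which decays like $\exp(-\Omega(\mu))$ once a constant slack is inserted, must be shown to beat that count uniformly over all types. The thinnest margin --- and hence the delicate case --- occurs for partitions consisting of one very large part (with $n_1$ close to $n$) together with a few small parts, since these interpolate between the $t=2$ cut bound and the $t=n$ density bound and push $\mu$ closest to the threshold $k(t-1)$. I would handle this by parametrising partitions through the surplus $s=n-t=\sum_i(n_i-1)$, bounding both the number of partitions with a given $s$ and the deficit $\mu-k(t-1)$ in terms of $s$, and then choosing the Chernoff slack so that the exponential decay dominates the combinatorial count for every admissible $p$; separate but routine estimates near the connectivity threshold and in the dense regime complete the argument.
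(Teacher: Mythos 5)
First, a point of order: the paper does not prove Lemma \ref{STP-general} at all --- it is quoted verbatim from Gao, P\'erez-Gim\'enez and Sato \cite{PXC}, where its proof occupies essentially an entire research paper. So there is no internal proof to compare against; your proposal has to stand on its own, and it does not. Your deterministic upper bound is correct, the reduction via Theorem \ref{N-W-T} is the right skeleton, and the extreme cases $t=n$ and $t=2$ are handled correctly (the latter assuming the classical fact that edge-connectivity equals $\delta$ almost surely). The genuine gap is the central claim that for every intermediate partition the mean of $e(G/\cP)$ exceeds $k(t-1)$ \emph{with room to spare}. This is false exactly in the regime where the theorem is hardest, namely $p=\Theta(\log n/n)$, where the minimum is attained by $\delta$ (cf.\ Lemma \ref{STP-bound}). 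Take the partition consisting of the giant part $V\setminus\{u,v\}$ together with the singletons $\{u\},\{v\}$, where $u,v$ are vertices of degree close to $\delta$. Then $e(G/\cP)=d(u)+d(v)-2\cdot\mathbf{1}[u\sim v]$ while $k(t-1)=2\delta$, so the Nash--Williams--Tutte condition fails precisely when $u\sim v$ and both degrees are within one of $\delta$. Whether such a configuration exists is a question about the extremal structure of the degree sequence --- one must prove that almost surely no two vertices of near-minimum degree are adjacent --- and not a question of concentration of $e(G/\cP)$ around its mean: that mean is $\approx 2np$, far above $2\delta$, yet the failure event has nothing to do with a large deviation of the crossing-edge count. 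No Chernoff-plus-union-bound over partitions can see this; the same obstruction recurs for every partition with a few singleton parts placed at low-degree vertices, so it cannot be absorbed into your $t=2$ case.

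A second, related defect is that $k=\min\{\delta,\lfloor e(G)/(n-1)\rfloor\}$ is itself a random variable determined by the same graph, so ``the probability that a fixed partition fails'' is not a binomial tail event, and your sketch never decouples $k$ from the randomness (your own treatment of $t=n$, which works only because $e(G/\cP)$ and $k$ are compared on the same realization, shows the coupling is essential rather than a technicality). These two issues are exactly what makes \cite{PXC} nontrivial: the proof there needs structural lemmas about low-degree vertices (pairwise non-adjacency, disjoint neighborhoods) of the same kind used to prove $\lambda(G(n,p))=\delta(G(n,p))$ --- a result you invoke as a black box for $t=2$ without noticing that the identical difficulty permeates all partitions with small parts --- together with a separate, delicate analysis in the window $p\approx \beta\log n/n$ where $\delta$ and $\lfloor e(G)/(n-1)\rfloor$ cross and \emph{both} constraints are simultaneously tight. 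Your closing sentence delegates precisely these points to ``separate but routine estimates,'' but they are the substance of the theorem, not routine.
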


In this section, we denote $\beta=\frac{2}{\log e-\log 2}\approx 6.51778$.
\begin{lemma}[\cite{PXC}]\label{STP-bound}
If
$$p\geq\frac{\beta(\log n-\log\log n/2)+\omega(1)}{n-1},$$
then $T(G(n,p))= \lfloor\frac{e(G(n,p))}{n-1}\rfloor$ almost surely; if
$$p\leq\frac{\beta(\log n-\log\log n/2)-\omega(1)}{n-1},$$
then $T(G(n,p))= \delta(G(n,p))$ almost surely.
\end{lemma}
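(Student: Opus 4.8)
The plan is to reduce the statement to a comparison of the two quantities appearing in Lemma \ref{STP-general}. By that lemma, almost surely
$$T(G(n,p))=\min\{\delta(G(n,p)),\ \lfloor e(G(n,p))/(n-1)\rfloor\},$$
so it suffices to decide, in each of the two ranges of $p$, which term is the smaller one. First I would pin down the edge term: since $e(G(n,p))$ is a binomial variable with mean $\binom{n}{2}p$, the Chernoff bound of Lemma \ref{CHER-B} shows $e(G(n,p))=(1+o(1))\binom{n}{2}p$ almost surely, whence $\lfloor e(G(n,p))/(n-1)\rfloor=(1+o(1))\tfrac{np}{2}$. Thus the whole problem turns on whether the minimum degree lies above or below $\tfrac{np}{2}$, i.e. above or below half of the expected degree $d:=(n-1)p$.

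Next I would estimate the minimum degree by the first- and second-moment methods applied to the count $X=\#\{v:d_G(v)<np/2\}$. Each degree is $\mathrm{Bin}(n-1,p)$ with mean $d$, so $\mathbb{E}[X]=n\,\Pr[\mathrm{Bin}(n-1,p)<np/2]$. The crux is a sharp estimate of this lower tail at relative deviation $1/2$ from the mean. Writing $\mu=d$ and using a local-limit/Stirling computation (the Poisson approximation $\mathrm{Bin}(n-1,p)\approx\mathrm{Poisson}(\mu)$ is convenient, since $p\to0$), one obtains
$$\Pr[\mathrm{Bin}(n-1,p)\le\mu/2]=\frac{\Theta(1)}{\sqrt{\mu}}\exp\!\Big(-\tfrac{\mu}{2}(1-\log2)\Big),$$
the $1/\sqrt{\mu}$ factor being the Gaussian prefactor that produces the second-order correction. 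Hence $\log\mathbb{E}[X]=\log n-\tfrac12\log\mu-\tfrac{\mu}{2}(1-\log2)+O(1)$, and since $\mu=\Theta(\log n)$ in the critical window we have $\tfrac12\log\mu=\tfrac12\log\log n+O(1)$. Solving $\log\mathbb{E}[X]\to-\infty$ gives exactly $\mu>\tfrac{2}{1-\log2}\big(\log n-\tfrac12\log\log n\big)+\omega(1)=\beta\big(\log n-\tfrac12\log\log n\big)+\omega(1)$, which is the stated threshold after dividing by $n-1$.

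With this estimate in hand the two regimes follow. When $p\ge\frac{\beta(\log n-\log\log n/2)+\omega(1)}{n-1}$ we have $\mathbb{E}[X]\to0$, so by Markov's inequality almost surely $X=0$, i.e. $\delta(G(n,p))\ge np/2\ge\lfloor e/(n-1)\rfloor$, and the minimum in Lemma \ref{STP-general} is the edge term; thus $T(G(n,p))=\lfloor e(G(n,p))/(n-1)\rfloor$. When $p\le\frac{\beta(\log n-\log\log n/2)-\omega(1)}{n-1}$ we have $\mathbb{E}[X]\to\infty$, and a second-moment computation for $X$ (the degrees of distinct vertices are only weakly correlated, their joint indicator contributing merely a $1+O(p)$ factor over the product) gives $\mathrm{Var}(X)=o(\mathbb{E}[X]^2)$, so by Chebyshev $X>0$ almost surely; hence $\delta(G(n,p))<np/2=(1+o(1))\lfloor e/(n-1)\rfloor$, the minimum is $\delta$, and $T(G(n,p))=\delta(G(n,p))$.

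The main obstacle is the sharp binomial lower-tail estimate: the crude Chernoff bound of Lemma \ref{CHER-B} recovers only the leading term $\beta\log n$, whereas the displayed $\tfrac12\log\log n$ correction and the exact constant $\beta$ require the local limit theorem (equivalently a careful Stirling expansion of $\binom{n-1}{k}p^k(1-p)^{n-1-k}$ at $k\approx d/2$) to capture the $1/\sqrt{\mu}$ prefactor. A secondary technical point is the boundary comparison of $\delta$ with $\lfloor e/(n-1)\rfloor$, since both concentrate near $np/2$; this is resolved by the $\omega(1)$ slack in the hypothesis, which drives $\mathbb{E}[X]$ to $0$ or $\infty$ with room to spare (so that, in the lower regime, already the vertices of degree at most $np/2-1$ proliferate), together with controlling the degree correlations in the second-moment step.
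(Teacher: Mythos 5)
The paper contains no proof of this lemma at all: it is imported verbatim, with the citation \cite{PXC}, from Gao, P\'erez-Gim\'enez and Sato, exactly as Lemma \ref{STP-general} is. So there is nothing in the paper to compare your argument against; what you have written is a self-contained derivation of a quoted result, and its architecture is correct. Your route --- reduce via Lemma \ref{STP-general} to deciding whether $\delta(G(n,p))$ lies above or below $\lfloor e(G(n,p))/(n-1)\rfloor \approx np/2$, then run first- and second-moment arguments on $X=\#\{v: d(v)<np/2\}$ --- is in substance how the cited source itself obtains this statement as a corollary of its main theorem. Your computation recovers the right constant, since with natural logarithms $\beta=2/(\log e-\log 2)=2/(1-\log 2)$ matches the lower-tail rate $\tfrac12(1-\log 2)$ of a binomial at half its mean, and the $1/\sqrt{\mu}$ local-limit prefactor is exactly what produces the $\tfrac12\log\log n$ second-order term. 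You also correctly identify the genuine technical crux: the Chernoff bound of Lemma \ref{CHER-B} only sees the leading term $\beta\log n$, and one needs a Stirling-type pointwise estimate of $\binom{n-1}{k}p^k(1-p)^{n-1-k}$ at $k\approx\mu/2$ with \emph{relative} error $1+o(1)$ (total-variation Poisson approximation is not enough here, since the probabilities being estimated are themselves of order $\sqrt{\log n}/n$, smaller than the TV error $O(np^2)$); your parenthetical remark about the careful Stirling expansion is precisely the fix.

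Two sketch-level details would need tightening in a full write-up, neither of which damages the approach. First, in the lower regime, vertices of degree at most $np/2-1$ do not quite suffice: since almost surely $\lfloor e/(n-1)\rfloor\geq e/(n-1)-1\geq np/2-1-o(1)$, you need a vertex of degree at most $np/2-2$ (or any larger constant offset) to force $\delta\leq\lfloor e/(n-1)\rfloor$; as you say, the $\omega(1)$ slack pays for any constant, but the constant you named is off by one. Second, your tail estimate is valid only in the critical window where $\mu=\Theta(\log n)$ (one needs $k^2=o(n)$ and $np^2=o(1)$ for the Stirling/Poisson comparison); for $p$ far above the threshold the crude Chernoff bound already gives $n\Pr[\mathrm{Bin}\leq\mu/2]\to 0$, and for $p$ below the connectivity threshold $\delta=0$ makes $T=\delta$ trivial, so the claim follows by a routine regime split. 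With these adjustments your proposal is a correct proof of the cited lemma.
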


We knew that $m-k(n-2)$ is a lower bound of $rmc_k(G)$.
Next is an upper bound of $rmc_k(G)$.
Although the upper bound is rough, it is useful for the subsequent proof.

\begin{proposition}\label{rough-bound}
If $G$ is a graph with $\tau(G)\geq k$, then $rmc_k(G)\leq m-(k-1)(n-2)$.
\end{proposition}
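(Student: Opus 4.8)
The plan is to prove the equivalent statement that every extremal $RMC_k$-coloring $\Gamma$ wastes at least $(k-1)(n-2)$ colors, i.e. $\omega=\sum_{i}(|T_i|-2)\ge (k-1)(n-2)$, where $T_1,\dots,T_r$ are the nontrivial color-induced trees (each a tree by the earlier proposition, and each with $|T_i|\ge 3$ since a nontrivial color carries at least two edges). Since $rmc_k(G)=m-\omega$, this gives exactly the claimed bound. The case $k=1$ is trivial because $rmc_1(G)\le m$, so I assume $k\ge 2$; then Corollary \ref{bound-k} gives $n\ge 2k\ge 4$, which is all the size information the argument needs.

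First I would extract a covering condition on pairs of vertices. Fix two distinct vertices $u,v$. Among the $k$ rainbow monochromatic $uv$-paths guaranteed by $\Gamma$, at most one can carry a trivial color: a trivial color lies on a single edge, so such a path must be the edge $uv$ itself, and there is at most one such edge. The remaining (at least) $k-1$ paths use pairwise distinct nontrivial colors, so they lie in $k-1$ distinct nontrivial trees, each of which contains both $u$ and $v$. Writing $c(u,v)$ for the number of nontrivial trees containing both $u$ and $v$, this gives $c(u,v)\ge k-1$ for every pair. Summing the identity $\sum_i\binom{|T_i|}{2}=\sum_{\{u,v\}}c(u,v)$ over all $\binom{n}{2}$ pairs then yields the global bound $\sum_i\binom{|T_i|}{2}\ge (k-1)\binom{n}{2}$.

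The heart of the proof is a per-tree inequality that converts this quadratic bound into the desired linear one: for every integer $a$ with $3\le a\le n$ one has $\binom{a}{2}(n-2)\le (a-2)\binom{n}{2}$. I would prove this by putting $\phi(a)=(a-2)n(n-1)-a(a-1)(n-2)$ and noting that $\phi$ is a downward parabola in $a$ with $\phi(n)=0$ and $\phi(3)=(n-3)(n-4)\ge 0$ (using $n\ge 4$), so $\phi\ge 0$ on the whole interval $[3,n]$. Applying this inequality to each $|T_i|$ and summing gives $(k-1)\binom{n}{2}\le\sum_i\binom{|T_i|}{2}\le\frac{\binom{n}{2}}{\,n-2\,}\sum_i(|T_i|-2)$; dividing through by $\binom{n}{2}/(n-2)$ yields $\omega=\sum_i(|T_i|-2)\ge (k-1)(n-2)$, and hence $rmc_k(G)=m-\omega\le m-(k-1)(n-2)$.

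The main obstacle is precisely this last extremal estimate: one must show that the minimum of $\sum_i(|T_i|-2)$ subject to $\sum_i\binom{|T_i|}{2}\ge(k-1)\binom{n}{2}$ is attained by $k-1$ spanning trees, of total cost $(k-1)(n-2)$. The concavity argument above settles it uniformly and elementarily, the key point being that the ``efficiency'' $\binom{a}{2}/(a-2)$ is largest at $a=n$. If one prefers to remain within the paper's toolkit, the same minimizing configuration can instead be read off from the extremal solution produced by Lemma \ref{expression}; either route reduces the entire proposition to the single elementary inequality $\phi(a)\ge 0$.
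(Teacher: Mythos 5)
Your proof is correct, and it takes a genuinely different route from the paper's own proof. Both arguments begin with a global counting inequality over the nontrivial color-induced trees, but the inequalities differ: you count, for each pair of vertices, the nontrivial trees containing both of them (at least $k-1$, since in a simple graph at most one of the $k$ rainbow monochromatic paths can be trivially colored), and double counting gives $\sum_i\binom{|T_i|}{2}\geq(k-1)\binom{n}{2}$; the paper instead counts monochromatic paths of order at least three (adjacent pairs need $k-1$ of them, non-adjacent pairs need $k$), each tree of order $a$ supplying $\binom{a-1}{2}$, obtaining $\sum_i\binom{|T_i|-1}{2}\geq k\binom{n}{2}-e(G)$. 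The essential divergence is in how the quadratic bound is converted into the linear waste bound. The paper runs a case analysis: if some color tree is spanning, it inducts on $k$, using that the restriction of the coloring to $G-E(T_1)$ is an extremal $RMC_{k-1}$-coloring; otherwise it invokes the optimization Lemmas \ref{expression} and \ref{expression-1} and splits further into $r\leq k-1$ (a contradiction), $r>k$, and $r=k$, the last requiring a discriminant estimate. You replace all of this by the single concavity inequality $\binom{a}{2}(n-2)\leq(a-2)\binom{n}{2}$ for $3\leq a\leq n$, i.e., the observation that the efficiency $\binom{a}{2}/(a-2)$ is maximized at $a=n$; this applies uniformly to every nontrivial tree, spanning or not, so no induction, no case split, and no optimization lemma is needed. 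Your supporting steps all check out: $rmc_k(G)=m-\omega$ and $|T_i|\geq 3$ are available earlier in the paper, and $n\geq 2k\geq 4$ follows from Corollary \ref{bound-k} together with Theorem \ref{SN}. What the paper's heavier machinery buys is reuse, since the stronger path-counting inequality and the minimization lemmas are exactly the tools needed for the sharper results such as Proposition \ref{RMC-mian-1}; that extra precision is unnecessary for this rough bound, where your argument is shorter and self-contained.
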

\begin{proof}
Since the result holds for $k=1$, we only consider $k\geq 2$.
Suppose $\Gamma$ is an extremal $RMC_k$-coloring of $G$ and
$\mathcal{T}=\{T_1,\cdots,T_r\}$ is the set of nontrivial color-induced trees with $|T_1|\geq \cdots\geq|T_r|$.
Then
\begin{align}
k{n\choose 2}-e(G)\leq \sum_{i\in[r]}{|T_i|-1\choose 2}. \label{RMC-cover11}
\end{align}

{\bf Case 1.} $T_1$ is a spanning tree of $G$.

Then $\Gamma$ is an extremal $RMC_{k-1}$-coloring restricted on $G'=G-E(T_1)$
(this result has been proved in Theorem \ref{RMC-mian-1}).
By induction on $k$,
$$|\Gamma(G')|=rmc_{k-1}(G')\leq e(G')-(k-2)(n-2).$$
Then
\begin{align*}
rmc_k(G)&=1+|\Gamma(G')|= 1+rmc_{k-1}(G')\leq 1+e(G')-(k-2)(n-2)\leq m-(k-1)(n-2).
\end{align*}

{\bf Case 2.} $|T_i|\leq n-1$ for each $i\in[r]$.

By Lemmas \ref{expression} and \ref{expression-1},
the expression $\sum_{i\in[r]}(|T_i|-2)$,
subjects to $(\ref{RMC-cover11})$ and $3\leq |T_i|\leq n-1$,
is minimum when $|T_1|=\cdots=|T_{r-1}|=n-1$ and $|T_{r}|=x+1$,
where $x$ is an integer with $3\leq x+1\leq n-2$.

If $r\leq k-1$, then $\sum_{i\in[r]}{|T_i|-1\choose 2}< (k-1){n-2\choose 2}<k{n\choose 2}-e(G)$,
a contradiction to (\ref{RMC-cover11}).

If $r>k$, then $\Gamma$ wastes at least $k(n-3)\geq (k-1)(n-2)$ colors.
Thus $rmc_k(G)\leq m-(k-1)(n-2)$.

If $r= k$, then
$$(k-1){n-2\choose 2}+{x\choose 2}\geq k{n\choose2}-e(G).$$
So, $x^2-x-\alpha\geq0$, where
$$\alpha=2[{n\choose 2}+(2n-3)(k-1)-e(G)]=2[(2n-3)(k-1)+e(\overline{G})].$$
The inequality holds when $x\geq \frac{1+\sqrt{1+4\alpha}}{2}\geq \sqrt{\alpha}$.
Thus, $\Gamma$ wastes at least
$$\Sigma_{i\in[k]}(|T_i|-2)=(k-1)(n-2)+x-1\geq(k-1)(n-2)+\sqrt{\alpha}-1.$$
Since $k\geq 2$, $\sqrt{\alpha}\geq 1$.
Thus $rmc_k(G)\leq m-(k-1)(n-2)$.
\end{proof}

\begin{theorem}
Let $k=k(n)$ be an integer such that $\lfloor\frac{n}{2}\rfloor> k\geq 1$
and let $rmc_k(K_n)>f(n)\geq k(n-1)$.
Then
$$p=
\begin{cases}
\frac{f(n)+kn}{n^2}, & f(n)\geq O(n\log n)\mbox{ and }k=o(n);\\
\min\{\frac{k}{n},\frac{\log n}{n}\}, & f(n)=o(n\log n)\mbox{ and }k=o(n);\\
1,& k=O(n)\mbox{ and }f(n)<rmc_k(K_n).
\end{cases}$$
is a sharp threshold function for the property $rmc_k(G(n, p)) \geq f (n)$.
\end{theorem}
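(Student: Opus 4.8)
The plan is to control $rmc_k$ entirely through two much simpler, monotone parameters: the edge count $m=e(G)$ and the spanning-tree packing number $T(G)$. By the lower bound $rmc_k(G)\ge m-k(n-2)$ (the corollary to Theorem~\ref{SN}) together with the upper bound of Proposition~\ref{rough-bound}, every $G$ with $\tau(G)\ge k$ satisfies
\[
m-k(n-2)\ \le\ rmc_k(G)\ \le\ m-(k-1)(n-2),
\]
whereas any $G$ with $\tau(G)<k$ admits no $RMC_k$-coloring and so fails $rmc_k(G)\ge f(n)$ at once (recall $f(n)\ge k(n-1)\ge 1$). Writing
\[
E_1=\{\tau(G)\ge k\ \text{and}\ m\ge f(n)+k(n-2)\},\qquad E_2=\{\tau(G)\ge k\ \text{and}\ m\ge f(n)+(k-1)(n-2)\},
\]
one then has $E_1\Rightarrow\{rmc_k(G)\ge f(n)\}\Rightarrow E_2$. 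Since $E_1$ and $E_2$ differ only by a shift of $n-2$ in the edge demand, they share the same $p$-threshold, so it suffices to locate the threshold of $E_1$ (the upper direction uses $E_1$, the lower direction uses $\neg E_2$).

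Next I would unfold $\tau(G)\ge k$. By Theorem~\ref{N-T} and Lemma~\ref{STP-general}, almost surely $\tau(G(n,p))\ge k$ is equivalent to $T(G(n,p))=\min\{\delta(G(n,p)),\lfloor e(G(n,p))/(n-1)\rfloor\}\ge k$, i.e.\ to the conjunction of a minimum-degree demand $\delta\ge k$ and an edge demand $m\ge k(n-1)$; the latter is already implied by $m\ge f(n)+k(n-2)$ since $f(n)\ge k(n-1)$. Thus $E_1$ is governed by exactly two competing requirements: $\delta(G(n,p))\ge k$, whose threshold is the classical $\Theta(\log n/n)$ for small $k$, and $m\ge f(n)+k(n-2)$, whose threshold is $\Theta\!\big((f(n)+kn)/n^2\big)$ because $\mathbb E[m]=\binom n2 p$ concentrates by the Chernoff bound (Lemma~\ref{CHER-B}). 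The overall threshold is set by whichever requirement is harder to satisfy, and the three displayed regimes are precisely the outcomes of this comparison.

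In the regime $f(n)=\Omega(n\log n)$, $k=o(n)$ the edge demand dominates. I would take $p=c_2(f(n)+kn)/n^2$ with $c_2$ a large constant; since $f(n)+kn=\Omega(n\log n)$ this $p$ exceeds $\big(\beta(\log n-\tfrac12\log\log n)+\omega(1)\big)/(n-1)$, so Lemma~\ref{STP-bound} gives $T(G)=\lfloor m/(n-1)\rfloor$ almost surely, while Chernoff gives $m\ge f(n)+k(n-2)$ almost surely; together these force $E_1$. For $p=c_1(f(n)+kn)/n^2$ with $c_1$ small, Chernoff instead yields $m<f(n)+(k-1)(n-2)$ almost surely — using $f(n)+(k-1)(n-2)=(1-o(1))(f(n)+kn)$, valid because $2k+n=o(f(n)+kn)$ — so $\neg E_2$ holds and the property fails. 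In the sparse regime $f(n)=o(n\log n)$ one has $k=o(\log n)$ (forced by $k(n-1)\le f(n)$), hence $f(n)+kn=o(n\log n)$ and the edge threshold sits below $\log n/n$; now the minimum-degree/connectivity requirement is binding, with threshold $\Theta(\log n/n)$ (up to the usual $\log\log n$ corrections for growing $k$). Below it (by a constant factor) $G(n,p)$ has isolated vertices almost surely by the connectivity result of \cite{PA}, so $\tau(G)<k$ and the property fails; above it a union bound over the Chernoff lower tail of $\mathrm{Bin}(n-1,p)$ gives $\delta(G)\ge k$, while $m=\Theta(n\log n)$ trivially beats $f(n)+k(n-2)=o(n\log n)$, so $E_1$ holds. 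Finally, when $k=\Theta(n)$ both requirements force $p$ to stay bounded away from $0$ (and, as $f(n)$ approaches $rmc_k(K_n)=\Theta(n^2)$, arbitrarily close to $1$), which is recorded by the constant threshold $1$.

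The hard part will be the minimum-degree estimate for growing $k$: I must show $\delta(G(n,p))\ge k$ almost surely once $p$ is a constant factor above the stated threshold, uniformly in $k=k(n)$, which needs careful Chernoff-plus-union-bound control of $\Pr[\mathrm{Bin}(n-1,p)<k]$ and a verification that a single constant $c_2$ can serve both this estimate and the edge-count estimate at the same time. The remaining delicacy is bookkeeping: checking that in each regime the dominant requirement is met with room to spare while the subordinate one is satisfied automatically, and treating the degenerate dense regime $k=\Theta(n)$ — where the admissible window $k(n-1)\le f(n)<rmc_k(K_n)$ itself constrains how large $k$ may be — with care.
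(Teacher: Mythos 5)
Your proposal follows essentially the same route as the paper's own proof: the same sandwich $m-k(n-2)\le rmc_k(G)\le m-(k-1)(n-2)$ (the corollary to Theorem \ref{SN} together with Proposition \ref{rough-bound}), the same necessity facts (Theorem \ref{SN} and Claim \ref{IFF}), the same spanning-tree-packing lemmas, Chernoff concentration of the edge count, and the connectivity threshold for the sparse lower bound. The only organizational difference is that you package the two directions into the events $E_1,E_2$ and use Lemma \ref{STP-general} to split $\tau\ge k$ into a minimum-degree demand plus an edge demand, where the paper invokes Lemma \ref{STP-bound} directly in each case; this buys nothing essential but makes the regime comparison transparent. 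Also, the minimum-degree estimate you flag as the hard part is easy in the only regime where it is needed: there $k=o(\log n)$, so once $p\ge c\log n/n$ we have $k\le (n-1)p/2$, and Lemma \ref{CHER-B} with $\delta=\tfrac12$ plus a union bound over vertices gives $\delta(G(n,p))\ge k$ almost surely for any fixed $c>8$.

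One substantive point deserves emphasis. In the regime $f(n)=o(n\log n)$ you correctly observe that $f(n)\ge k(n-1)$ forces $k=o(\log n)$, and you conclude the threshold there is $\Theta(\log n/n)$. This does not match the statement as written: since $k=o(\log n)$, the stated $\min\{\frac{k}{n},\frac{\log n}{n}\}$ equals $\frac{k}{n}$, and $\frac{k}{n}$ cannot be a sharp threshold, because for every constant $C$ the choice $p=C\frac{k}{n}=o(\frac{\log n}{n})$ lies below the connectivity threshold, so $G(n,p)$ is almost surely disconnected and the property fails almost surely. So your version is the correct one: the $\min$ in the statement should be $\max$ (equivalently, the threshold is simply $\frac{\log n}{n}$ in this regime), which is exactly what the paper's proof establishes in its subcase $k\le\log n$; its other subcase $k>\log n$ is vacuous under the hypotheses, something the paper does not notice. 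Finally, in the regime $k=O(n)$ your treatment is loose but repairable: with $h(n)=1$ the only admissible upper constant is $c_2=1$ (when $f(n)$ is close to $rmc_k(K_n)$ the property forces all but $O(n)$ edges to be present), and that is how the paper argues — $p=1$ yields $K_n$ deterministically, while $p\le l$ kills the edge count via Claim \ref{IFF}.
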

\begin{proof}
Let $c$ be a positive constant and let $E(||G(n,cp)||)$ be the expectation of the number of edges in $G(n,cp)$.
Then
$$E(||G(n,cp)||)=
\begin{cases}
\frac{c(n-1)}{2n}f(n)+\frac{c\cdot k(n-1)}{2}, & f(n)\geq O(n\log n)\mbox{ and }k=o(n);\\
\frac{c\cdot k(n-1)}{2}, & f(n)=o(n\log n),k=o(n)\mbox{ and }k>\log n;\\
\frac{c \log n(n-1)}{2}, & f(n)=o(n\log n),k=o(n)\mbox{ and }k\leq\log n;\\
c{n\choose 2},& k=O(n)\mbox{ and }f(n)<rmc_k(K_n).
\end{cases}$$
By Lemma \ref{CHER-B}, both inequalities
\begin{align*}
Pr[||G(n,cp)||<\frac{1}{2}E(||G(n,cp)||)]\leq \exp(-\frac{1}{8}E(||G(n,cp)||))= o(1)
\end{align*}
and
\begin{align*}
Pr[||G(n,cp)||>\frac{3}{2}E(||G(n,cp)||)]\leq \exp(-\frac{1}{10}E(||G(n,cp)||))= o(1)
\end{align*}
hold for each $p$.

{\bf Case 1.} $k=O(n)$, i.e., there is an $l\in \mathbb{R}^+$ such that $l\cdot n\leq k<\lfloor\frac{n}{2}\rfloor$.

Since $G(n,p)=K_n$, $rmc_k(G(n,p))\geq f(n)$ always holds.
On the other hand, we have
$$||G(n,l\cdot p)||\leq\frac{3}{2}E(||G(n,l\cdot p)||)=\frac{3l}{2}\cdot {n\choose 2}<k(n-2)$$
almost surely.
By Claim \ref{IFF}, $G(n,l\cdot p)$ does not have $RMC_k$-colorings almost surely.

{\bf Case 2.} $k=o(n)$.

{\bf Case 2.1.} $f(n)\geq O(n\log n)$.

Then, there is an $s\in \mathbb{R}^+$ and $f(n)\geq s\cdot n\log n$.
Let
$$c_1=
\begin{cases}
\beta+1, & s\geq1;\\
\frac{\beta+1}{s}, & 0<s<1.
\end{cases}$$
Since $f(n)\geq s\cdot n\log n$, we have
\begin{align*}
c_1p\geq \frac{(\beta+1)(\log n+kn)}{n}\geq \frac{\beta(\log n-\log\log n/2)+\omega(1)}{n-1}. \label{pre-used}
\end{align*}
Since
$$||G(n,c_1p)||\geq
\frac{1}{2}E(||G(n,c_1p)||)=
\frac{\beta+1}{2}\cdot\frac{n-1}{2n}f(n)+\frac{k(n-1)(\beta+1)}{4}$$
almost surely, by Lemma \ref{STP-bound}, $T(G(n,c_1p))=\lfloor\frac{||G(n,c_1p)||}{n-1}\rfloor >k$ almost surely,
i.e., $G(n,c_1p)$ has $RMC_k$-colorings almost surely.
Therefore,
\begin{align*}
rmc_k(G(n,c_1p))&\geq ||G(n,c_1p)||-k(n-2)\\
&\geq\frac{\beta+1}{2}\cdot\frac{n-1}{2n}f(n)+\frac{k(n-1)(\beta+1)}{4}-k(n-2)\\
&>\frac{(\beta+1)(n-1)}{4n}f(n)\\
&>f(n)
\end{align*}
almost surely.

Let $c_2=\frac{2}{3}$. Then
\begin{align*}
||G(n,c_2p)||&\leq \frac{3}{2}E(||G(n,c_2p)||)\\
&\leq\frac{3c_2}{2}\cdot\frac{n-1}{2n}f(n)+\frac{3c_2}{2}\cdot\frac{k(n-1)}{2}\\
&<\frac{1}{2}[f(n)+k(n-1)]
\end{align*}
almost surely.
Thus, either $G(n,c_2p)$ does not have $RMC_k$-colorings almost surely,
or $rmc_k(G(n,c_2p))<||G(n,c_2p)||-(k-1)(n-2)<\frac{1}{2}f(n)$ almost surely
(recall that $rmc_k(G)\leq m-(k-1)(n-2)$ by Proposition \ref{rough-bound}).

{\bf Case 2.2.} $f(n)=o(n\log n)$.

If $k\leq \log n$, then $p=\frac{\log n}{n}$.
Let $c_1=\beta+1$ and $c_2=\frac{1}{2}$ be two constants.
Since $c_1p>\frac{(\beta+1)\log n}{n}\geq \frac{\beta(\log n-\log\log n/2)+\omega(1)}{n-1}$,
by Lemma \ref{STP-bound}, $T(G(n,c_1p))=\lfloor\frac{||G(n,c_1p)||}{n-1}\rfloor$ almost surely.
Since
$$||G(n,c_1p)||\geq \frac{1}{2}E(||G(n,c_1p)||)=\frac{\log n(n-1)(\beta+1)}{4}$$
almost surely, $T(G(n,c_1p))\geq \log n\geq k$ almost surely,
i.e., $G(n,c_1p)$ has $RMC_k$-coloring almost surely.
Therefore,
\begin{align*}
rmc_k(G(n,c_1p))&\geq ||G(n,c_1p)||-k(n-2)\\
&\geq\frac{\log n(n-1)(\beta+1)}{4}-k(n-2)\\
&\geq \frac{3\log n(n-1)}{4}>f(n)
\end{align*}
almost surely. For $G(n,c_2p)$, since $c_2p=\frac{\log n}{2n}$, $G(n,c_2p)$ is not connected almost surely,
i.e., $G(n,c_2p)$ does not have $RMC_k$-colorings almost surely.

If $k>\log n$ and $k=o(n)$, then $p=\frac{k}{n}$.
Let $c_1=\beta+1$ and $c_2=1$.
Then $c_1p=\frac{(\beta+1)k}{n}>\frac{(\beta+1)\log n}{n}\geq \frac{\beta(\log n-\log\log n/2)+\omega(1)}{n-1}$,
i.e., $T(G(n,c_1p))=\lfloor\frac{||G(n,c_1p)||}{n-1}\rfloor$ almost surely.
Since $||G(n,c_1p)||\geq \frac{1}{2}E(||G(n,c_1p)||)=\frac{k(n-1)(\beta+1)}{4}$
almost surely, $T(G(n,c_1p))\geq k$ almost surely, i.e., $G(n,c_1p)$ has $RMC_k$-colorings almost surely.
Thus
\begin{align*}
rmc_k(G(n,c_1p))\geq ||G(n,c_1p)||-k(n-2)>\frac{3}{4}k(n-1)>\frac{3}{4}(n-1)\log n>f(n)
\end{align*}
almost surely.
For $G(n,c_2p)$, since $||G(n,c_2p)||\leq \frac{3}{2}E(||G(n,c_2p)||)=\frac{3}{4}k(n-1)<k(n-2)$
almost surely.
By Claim \ref{IFF}, $G(n,c_2p)$ does not have $RMC_k$-colorings almost surely.
\end{proof}

{\bf Remark 2.} Since $rmc_k(G)=rmc_k(K_n)$ if and only if $G=K_n$,
we only concentrate on the case $1\leq f(n)< rmc_k(K_n)$.
If $n$ is odd, then $G$ has $RMC_{\lfloor\frac{n}{2}\rfloor}$-colorings if and only if $G=K_n$.
So, we are not going to consider the case $k=\lfloor\frac{n}{2}\rfloor$.
$\square$

\end{document}